\newtheorem{theorem}{\bf Theorem}[section]
\newtheorem{define}[theorem]{\bf Definition}
\newtheorem{corollary}[theorem]{\bf Corollary}
\newtheorem{lemma}[theorem]{\bf Lemma}
\newtheorem{remark}{\bf Remark}
\newtheorem*{theorem*}{Theorem}
\numberwithin{equation}{section}
\newcommand{\RR}{\mathbb R}
\newcommand{\intl}{\int\limits}
\DeclareMathOperator{\RicH}{{\rm Ric}_{-}^K}
\begin{document}

\title[Gradient estimates for  weighted $p$-Laplacian equations]{Gradient estimates for  weighted $p$-Laplacian equations on Riemannian manifolds with a Sobolev inequality and integral Ricci bounds}
\def\cfac#1{\ifmmode\setbox7\hbox{$\accent"5E#1$}\else\setbox7\hbox{\accent"5E#1}\penalty 10000\relax\fi\raise 1\ht7\hbox{\lower1.0ex\hbox to 1\wd7{\hss\accent"13\hss}}\penalty 10000\hskip-1\wd7\penalty 10000\box7 }

\author[L. V. Dai]{Le Van Dai}
\address[L. V. Dai]{Department of Mathematics, Mechanics and Informatics, College of Science\\ Vi\^{e}t Nam National University, Ha N\^{o}i, Vi\^{e}t Nam}
\email{\href{mailto: Le Van Dai <daik55maths142@gmail.com>}{daik55maths142@gmail.com}}

\author[N.T. Dung]{Nguyen Thac Dung}
\address[N.T. Dung]{Department of Mathematics, Mechanics and Informatics, College of Science\\ Vi\^{e}t Nam National University, Ha N\^{o}i, Vi\^{e}t Nam}
\email{\href{mailto: N.T. Dung
<dungmath@gmail.com>}{dungmath@gmail.com}}

\author[N. D. Tuyen]{Nguyen Dang Tuyen}
\address[N. D. Tuyen]{Department of Mathematics,\\ National University of Civil Engineering,
Hanoi, Vietnam.}
\email{\href{mailto: Nguyen Dang Tuyen <tuyennd@nuce.edu.vn>}{tuyennd@nuce.edu.vn}}

\author[Liang Zhao]{Liang Zhao}
\address[Liang Zhao]{Department of Mathematics,\\ 
Nanjing University of Aeronautics and Astronautics, Nanjing, China}
\email{\href{mailto: Zhao Liang <zhaozongliang09@163.com>}{zhaozongliang09@163.com}}
\begin{abstract}
In this paper, we consider the non-linear general $p$-Laplacian equation $\Delta_{p,f}u+F(u)=0$ for a smooth function $F$ on smooth metric measure spaces. Assume that a Sobolev inequality holds true on $M$ and an integral Ricci curvature is small, we first prove a local gradient estimate for the equation. Then, as its applications, we prove several Liouville type results on manifolds with lower bounds of Ricci curvature. We also derive new local gradient estimates  provided that the integral Ricci curvature is small enough.

2010 \textit{Mathematics Subject Classification}: Primary 58J05; Secondary 58J35

\textit{Key words and phrases}: Liouville theorem, Gradient estimate, Lichnerowicz equation, 
Allen-Cahn equation, Fisher-KPP equation
\end{abstract}
%\large
\maketitle \vskip0.2cm
%$\gtrsim$
%%%%%%%%%%%%%%%%%%%%%%%%%%%%%%%%
%%%%%%%%%%%%%%%%%%%%%%%%%%%%%%%%

\section{Introduction}
It is well-known that gradient estimates are an important tool in geometric analysis and have been used, among other things, to derive Liouville theorems and Harnack inequalities
for positive solutions to a variety of nonlinear equations on Riemannian manifolds.	Historically, the local Cheng-Yau gradient estimate asserts that 
that if $M$ is an $n$-dimensional complete Riemannian manifold with $Ric\geq -(n-1)\kappa$ for some $\kappa\geq0$ and $u: B(o, R)\subset M\to\RR$ harmonic and positive then there is a constant $c_n$ depending only on $n$ such that 
\begin{equation}\label{e11}
\sup\limits_{B(o,R/2)}\frac{|\nabla u|}{u}\leq c_n\frac{1+\ \sqrt[]{\kappa}R}{R}.
\end{equation}
Here $B(o,R)$ stands for the geodesic ball centered at a fixed point $o\in M$. When $\kappa=0$, this implies that a harmonic function with sublinear growth on a manifold with non-negative Ricci curvature is constant. Later, Cheng-Yau's gradient estimate has been extended and generalized by many mathematicians. To describe recent results, let us recall some notations. The triple $(M^n, g, e^{-f}d\mu)$ is called a smooth metric measure space if $(M, g)$ is a Riemannian manifold, $f$ is a smooth function on $M$ and $d\mu$ is the volume element induced by the metric $g$. On $M$, we consider the differential operator $\Delta_f$, which is called $f-$Laplacian and given by
$$ \Delta_f\ \cdot:=\Delta\cdot-\left\langle \nabla f,\ \nabla\cdot\right\rangle.  $$
It is symmetric with respect to the measure $e^{-f}d\mu$. That is,
$$ \intl_M\left\langle \nabla \varphi, \nabla\psi\right\rangle e^{-f}d\mu=-\intl_M(\Delta_f\varphi)\psi e^{-f}d\mu, $$
for any $\varphi, \psi\in C^\infty_0(M)$. Smooth metric measure spaces are also called manifolds with density. By $m$-dimensional Bakry-\'{E}mery Ricci tensor we mean
$$ Ric_f^m=Ric+Hessf-\frac{\nabla f\otimes\nabla f}{m-n}, $$
for $m > n$. The tensor ${\rm Ric}^n_f$
is only defined when $f$ is constant. In this case, this tensor is referred as $\infty-$Barky-\'{E}mery tensor 
$$ Ric_f=Ric+Hessf. $$  
In a variational point of view, the weighted $p$-Laplacian, $p>1$ is a natural generalization of $\Delta_f$ and is defined by
$$\Delta_{p,f}u:=e^f{\rm div}(e^{-f}|\nabla u|^{p-2}\nabla u)$$
for $u\in W^{1,p}_{loc}(M)$. In \cite{DD}, Dung and Dat considered $F(u)=\lambda u^{p-1}$ and studied gradient estimates for weighted $p$-eigenfunctions of $\Delta_{p,f}$.  If $F(u)=cu^\sigma$, \eqref{eq1} is a Lichnerowicz type equation. In \cite{zhaoyang}, the authors proved local gradient estimates for positive solutions to this equation, and as applications, they gave a corresponding Liouville property and Harnack inequality. Then, Wang \cite{W} estimated eigenvalues of the weighted $p$-Laplacian. Wang, Yang, and Chen \cite{WYC} established gradient estimates and entropy formulae for weighted $p$-heat equations. %Their works generalized Kotschwar-Ni's results (see \cite{kot}).
Later, Dung and Sung \cite{DS} investigated some Liouville properties for weighted $p$-harmonic $\ell$-forms on smooth metric measure spaces with Sobolev and Poincar\'{e} inequalities. For the general setting on metric spaces, recently in \cite{BBS}, the authors considered under which geometric conditions on the underlying metric measure space the finite-energy Liouville theorem holds for $p$-harmonic functions and quasiminimizers. For further discussion about this topic, we refer the reader to \cite{BBS, Hol, kot, Mo, Nak, Pigola, WZ, WYC} and the references therein. 
	
In another direction, gradient estimates have been successfully generalized on manifolds with integral Ricci curvature condition. Before stating results, let us fix some notation. for each $x\in M$, denote by $\rho(x)$ the smallest eigenvalue for the $m$-dimensional Bakry-\'{E}mery Ricci tensor ${\rm Ric}_f^m: T_xM\to T_xM$, and for any fixed number $K$, let 
$$({\rm Ric}_f^m)_{-}^K(x)=((n-1)K-\rho(x))_+=\max\{0, (n-1)K-\rho(x)\},$$
the amount of $m$-dimensional Bakry-\'{E}mery Ricci curvature lying below $(n-1)K$. Let
$$\|{\rm Ric}_{-}^K\|_{q,r}=\sup\limits_{x\in M}\left(\int\limits_{B(x,r)}\Big(({\rm Ric}_f^m)_{-}^K\Big)^qdvol\right)^\frac{1}{q}.$$
Then $\|\RicH\|_{q,r}$ measures the amount of $m$-dimensional Bakry-\'{E}mery Ricci curvature lying below a given bound, in this case, $(n-1)K$, in the $L^q$ sense. It is easy to see that $\|\RicH\|_{q,r}=0$ if and only if ${\rm Ric}_M\geq(n-1)K$. We also often work with the following scale invariant curvature quantity (with $K=0$)
$$k(x, q, r)=r^2\left(\oint\limits_{B(x,r)}\rho_{-}^q\right)^\frac{1}{q}, \quad\quad k(q,r)=\sup\limits_{x\in M}k(x,q,r),$$
where the notation 
$$\oint_{B(x,r)}(\cdot):=\frac{1}{|B(x,r)|}\int_{B(x,r)}(\cdot)$$ 
represents the average integral on $B(x,r)$ and $|B(x,r)|$ stands for the volume of $B(x,r)$. We should note that the integral curvature bound is a natural, and much weaker than lower bound Ricci curvature condition. It appears naturally in isospectral problems and geometric variation problems. For further discussion about the relationship between integral Ricci curvature condition and aspects of topology and geometry of manifolds, we refer the reader to \cite{BZ, Gal, PW1, PW2} and the references therein.   Recently, integral Ricci curvature conditions are used to give gradient estimates of possitive solutions to heat equations. In particular,  in \cite{Rose}, Rose investigated heat kernel upper bound on Riemannian manifolds with locally uniform Ricci curvature integral bounds. In \cite{RO}, Oliv\'{e} used the integral Ricci curvature to show a Li-Yau gradient estimate on a compact Riemannian manifolds with Neumann boundary condition. It is worth to mention that Li-Yau gradient estimates for linear heat equation on complete non-compact manifolds were obtained by Zhang and Zhu in \cite{ZZ, ZZ1}. Later, these results were generalizied by Wang in \cite{W2} to non-linear heat equation. Moreover, inspired by a method in  \cite{DD}, Wang derived a gradient estimate of Hamilton type for a non-linear heat equation in \cite{W1}.   

	Motivated by Liouville results for $p$-Laplacian obtained by Zhao and Yang in \cite{zhaoyang}, by Hou in \cite{Hou}, our aim is to give local gradient estimate for positive solutions of the following equation
	\begin{equation} \label{eq1}
	\Delta_{p,f}u+F(u)=0
	\end{equation}
	on non-compact smooth metric measure space. Throughout this paper, we assume that $F$ is a differentiable function, $ F(u)\geq 0$ when $ u\geq0 $. Let $h(v)=(p-1)^{p-1}e^{-v}F(e^{\frac{v}{p-1}})$, we assume further that $h'(v)\leq a:=a(p)$ for some constant $a\geq0$, where $a=0$ if $p\not=2$. We say that \textit{a weighted Sobolev inequality} holds true on $M$ if there exist positive constants $C_1, C_2, C_3$, depending only on $ m $, such that for every ball $ B_{0}(R)\subset M $, every function $ \phi \in C^{\infty}_{0}(B_{0}(R))$ we have
			\begin{equation}\label{sobolev1}
			\left( \int_{B_{0}(R)}|\phi|^{\frac{2m}{m-2}}e^{-f}d\mu\right) ^{\frac{m-2}{m}}\leq C_1e^{C_2(1+\sqrt{K}R)}V^{-C_3}\int_{B_{0}(R)}(R^{2}|\nabla \phi|^{2}+\phi^{2})e^{-f}d\mu,
			\end{equation}
			where $ V $ is volume of the geodesic ball $ B_{0}(R) $.	
			
	The main result of this paper can be stated as follows.
	\begin{theorem}\label{dl1}
		Let  $(M,g,d\mu)$ be a smooth metric space admitting a Sobolev inequality \eqref{sobolev1}. 		 Assume that $ u $ is a positive solution of \eqref{eq1} on the geodesic ball $ B_{0}(R)\subset M $ and $F(u)\geq0$ when $u\geq0, h'(v)\leq a=a(p)$ for some constant $a\geq0$, where $a=a(p)=0$ if $p\not=2$. For any $\eta>0$, there exists $b>0$ such that if $\|{\rm Ric}_{-}^K\|_{q,r}\leq \frac{1}{bR^2}$ and $k(q,1)\leq \frac{1}{b}$ then there exists a constant $ C_{p,m, V} $ which depends only on  $ p, m $ and $V$ and  such that
		\begin{equation} \label{kq1}
		\dfrac{|\nabla u|}{u}\leq C_{p,m, V}\dfrac{1+\sqrt{K}R}{R} +\eta,
		\end{equation}
		on the geodesic ball  $ B_{0}(\frac{R}{2}) $. 		However, if $\|{\rm Ric}_{-}^K\|_{q,r}=0$ then
				\begin{equation} \label{main}
		\dfrac{|\nabla u|}{u}\leq C_{p,m}\dfrac{1+\sqrt{K}R}{R} .
		\end{equation}
		on the geodesic ball  $ B_{0}(\frac{R}{2}) $, and $C(p,m)$ depends only on $p$ and $m$.
	\end{theorem}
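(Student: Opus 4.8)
The plan is to reduce \eqref{eq1} to a scalar equation for a logarithmic change of $u$, derive a Bochner-type differential inequality for the gradient, and then close the estimate by a Moser iteration powered by the Sobolev inequality \eqref{sobolev1}, absorbing the bad part of the Ricci curvature through the two smallness hypotheses. First I would set $v=(p-1)\log u$, so that $|\nabla u|/u=|\nabla v|/(p-1)$ and it suffices to bound $|\nabla v|$. Expanding $\Delta_{p,f}u=|\nabla u|^{p-2}\Delta_f u+(p-2)|\nabla u|^{p-4}\Hess u(\nabla u,\nabla u)$ and substituting $u=e^{v/(p-1)}$, a direct computation shows that \eqref{eq1} is equivalent to
\begin{equation*}
\Delta_{p,f}v+|\nabla v|^p+h(v)=0,
\end{equation*}
where $h(v)=(p-1)^{p-1}e^{-v}F(e^{v/(p-1)})$ is precisely the nonlinearity appearing in the hypotheses; this is the reason the structural condition is imposed on $h'(v)\le a$ rather than directly on $F$.

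Next I would put $w=|\nabla v|^2$ and differentiate the transformed equation in the direction of $\nabla v$, feeding the result into the Bochner-Weitzenb\"ock identity. The natural operator here is the linearized weighted $p$-Laplacian, the degenerate elliptic operator $\mathcal{L}\psi=e^{f}\divg\big(e^{-f}|\nabla v|^{p-2}(\nabla\psi+(p-2)\langle\nabla v,\nabla\psi\rangle|\nabla v|^{-2}\nabla v)\big)$. Combining these yields a differential inequality of the schematic form
\begin{equation*}
\mathcal{L}w\ge \alpha\,|\nabla v|^{p-2}|\Hess v|^2+2\,|\nabla v|^{p-2}\,{\rm Ric}_f^m(\nabla v,\nabla v)-\Lambda,
\end{equation*}
for some $\alpha=\alpha(p,m)>0$, where the Hessian term is bounded below by a refined Kato/Newton inequality, the drift is absorbed by the $m$-dimensional Bakry-\'Emery tensor, and $\Lambda$ collects the lower-order contributions coming from $|\nabla v|^p$ and from the bound $h'(v)\le a$. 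I would then split the curvature term as ${\rm Ric}_f^m(\nabla v,\nabla v)\ge\big((n-1)K-({\rm Ric}_f^m)_{-}^K\big)|\nabla v|^2$, isolating the harmless constant part $(n-1)K$ from the bad part $({\rm Ric}_f^m)_{-}^K$, which is controlled only in $L^q$.

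The decisive step, and the main obstacle, is turning this pointwise inequality into a supremum bound \emph{without} a pointwise lower Ricci bound, so the maximum principle is unavailable. Instead I would run a Moser iteration: multiply the inequality by $w^{\beta}\varphi^2 e^{-f}$ for cut-off functions $\varphi$ supported in $B_0(R)$ and a sequence of exponents $\beta$, integrate by parts, and insert the outcome into \eqref{sobolev1} to gain integrability at each stage. The crucial error integral $\int ({\rm Ric}_f^m)_{-}^K\,w^{\beta+1}\varphi^2e^{-f}$ is estimated by H\"older's inequality and then reabsorbed through \eqref{sobolev1}; here the hypotheses $\|\RicH\|_{q,r}\le (bR^2)^{-1}$ and $k(q,1)\le 1/b$ guarantee that the reabsorbed coefficient is small enough (for $b$ large) to be swallowed by the coercive Hessian term, while the Sobolev constants $C_1,C_2,C_3$ generate the factor $e^{C_2(1+\sqrt{K}R)}V^{-C_3}$ that must be tracked through every step. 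Summing the resulting geometric series produces
\begin{equation*}
\sup_{B_0(R/2)}|\nabla v|^2\le C_{p,m,V}\Big(\frac{1+\sqrt{K}R}{R}\Big)^2+\varepsilon(b),
\end{equation*}
with $\varepsilon(b)\to 0$ as $b\to\infty$; taking square roots and rescaling by $p-1$ gives \eqref{kq1}, where $\eta$ matches any prescribed tolerance once $b$ is fixed large.

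Finally, when $\|\RicH\|_{q,r}=0$ we have a genuine pointwise lower bound ${\rm Ric}_M\ge(n-1)K$, so the bad term $({\rm Ric}_f^m)_{-}^K$ vanishes identically and no residual $\varepsilon(b)$ is generated. In this regime the Moser machinery may be replaced by the classical localized maximum-principle argument applied to the same Bochner inequality, so the Sobolev constants drop out and one obtains the clean bound \eqref{main} with constant depending only on $p$ and $m$. The delicate technical points throughout are the degeneracy of $\mathcal{L}$ on the critical set $\{\nabla v=0\}$, which I would handle by the standard regularization $|\nabla v|^2\rightsquigarrow|\nabla v|^2+\delta$ followed by $\delta\to 0$, and the careful bookkeeping required to keep the curvature error strictly dominated by the coercive Hessian term at every iteration step.
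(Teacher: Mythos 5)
Your setup coincides with the paper's: the substitution $v=(p-1)\log u$, $w=|\nabla v|^2$, the transformed equation $\Delta_{p,f}v=-h(v)-w^{p/2}$, the linearized operator $\mathcal{L}_f$, the Bochner-type inequality with the refined Hessian bound, and the splitting ${\rm Ric}_f^m\ge (n-1)K-({\rm Ric}_f^m)_-^K$ followed by H\"older--interpolation--Sobolev absorption are exactly the paper's Lemmas \ref{bd3}--\ref{bd2}. The gap is in your decisive step. To absorb the curvature integral at test-function exponent $\beta$, the Young parameter must be taken of size $\varepsilon\sim\big(\beta\,\|{\rm Ric}_{-}^K\|_{q,r}\,C_Se^{C_2(1+\sqrt K R)}V^{-C_3}R^2\big)^{-1}$ to protect the gradient term, and the resulting penalty carries a positive power of $\beta\,\|{\rm Ric}_{-}^K\|_{q,r}\,e^{C_2(1+\sqrt K R)}V^{-C_3}R^2$; this is the paper's condition \eqref{condition}, \emph{with $b$ replaced by the current exponent $\beta$}. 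Your hypotheses give smallness of $\|{\rm Ric}_{-}^K\|_{q,r}$ relative to one \emph{fixed} $b$, so this quantity is of order $\beta/b$: it is small only while $\beta\lesssim b$, and a Moser iteration needs $\beta=\beta_\ell\to\infty$. Hence your claim that the error is ``swallowed by the coercive Hessian term at every iteration step'' fails at all sufficiently late stages; this is precisely the obstruction recorded in the paper's Remark after the proof of Theorem \ref{dl1}, which states that Moser iteration cannot be applied when $k(q,1)\neq0$. Moreover, your claimed output $\sup w\le C_{p,m,V}\big(\tfrac{1+\sqrt K R}{R}\big)^2+\varepsilon(b)$ with $\varepsilon(b)\to0$ has no visible source: summing a convergent product of multiplicative step constants produces a multiplicative constant, not an additive error. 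The paper's additive $\eta$ in \eqref{kq1} arises from an entirely different, elementary mechanism: Lemma \ref{bd2} is applied \emph{once}, at the single exponent $b_1=(b_0+p-1)\tfrac{m}{m-2}$ for which \eqref{condition} is assumed, giving $\|w\|_{L^{b_1}(B_0(3R/4))}\le d_1\tfrac{b_0^2}{R^2}V^{1/b_1}$, and then one uses that on a ball of finite weighted volume $\|w\|_{L^\infty}\le\|w\|_{L^{b_1}}+\eta$ once $b_1$ is large enough (depending on $w$, which the order of quantifiers in the statement permits). No iteration is run at all when $\|{\rm Ric}_{-}^K\|_{q,r}>0$. (One could conceivably close a full iteration by allowing the penalty coefficients to grow polynomially in $\beta_\ell$ and tracking a convergent infinite product, but that would be a different, multiplicative statement requiring bookkeeping you have not supplied, and it is not what you asserted.)

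For the second conclusion \eqref{main} your proposal also diverges in a problematic way. You suggest replacing the integral method by ``the classical localized maximum-principle argument'' when $\|{\rm Ric}_{-}^K\|_{q,r}=0$; but for $p\neq2$ solutions are only $C^{1,\alpha}$ and $\mathcal{L}_f$ degenerates on $\{\nabla u=0\}$, so $w$ lacks the regularity needed to run a pointwise argument at an interior maximum --- this is exactly why \cite{WZ}, \cite{zhaoyang} and the present paper use Moser iteration even under pointwise curvature bounds (your $\delta$-regularization of $|\nabla v|^2$ does not remove the critical set of $u$ itself). The paper instead keeps the Sobolev inequality and iterates: since \eqref{condition} now holds for every $b$, inequality \eqref{eq12} is valid at all exponents, and the $V$-independence of $C_{p,m}$ is not obtained by discarding Sobolev but by an exact cancellation --- the iteration contributes the factor $\big(V^{-2/m}\big)^{m/(2b_1)}=V^{-1/b_1}$, which cancels the factor $V^{1/b_1}$ in the $L^{b_1}$ bound of Lemma \ref{bd2}. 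So both halves of your argument would need repair: the first because the absorption mechanism is incompatible with a fixed curvature-smallness parameter, the second because the maximum principle is unavailable in the degenerate setting.
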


Note that the condition $\|{\rm Ric}_{-}^K\|_{p,r}=0$ is equivalent to ${\rm Ric}_f^m\geq (n-1)K$. In this case, we do not need to require any bound for $k(q,1)$. Moreover, a Sobolev inequality also holds true on $M$.
\begin{lemma}[see \cite{DD, zhaoyang}]\label{sobolev22}
			Let $ (M,g,d\mu)$ be a smooth metric measure space of dimension $n$. Assume that $ Ric_{f}^{m}\geq-(m-1)K $ where $K$ is a non-negative constant, $ m>n\geq2 $. Then, there exists a constant $ C $, depending only on $ m $, such that for every ball $ B_{0}(R)\subset M $, every function $ \phi \in C^{\infty}_{0}(B_{0}(R))$ we have
			\begin{equation*}
			\left( \int_{B_{0}(R)}|\phi|^{\frac{2m}{m-2}}e^{-f}d\mu\right) ^{\frac{m-2}{m}}\leq e^{C(1+\sqrt{K}R)}V^{-\frac{2}{m}}\int_{B_{0}(R)}(R^{2}|\nabla \phi|^{2}+\phi^{2})e^{-f}d\mu,
			\end{equation*}
			where $ V $ is geodesic ball volume $ B_{0}(R) $.
		\end{lemma}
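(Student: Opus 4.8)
The plan is to derive this weighted Sobolev inequality from two ingredients that both follow from the curvature hypothesis $Ric_f^m\geq-(m-1)K$: a \emph{weighted volume doubling} property and a \emph{weighted local Neumann--Poincar\'e inequality}. Once these hold on every ball of radius at most $R$, the scaled Sobolev inequality is produced by the abstract machinery of Saloff-Coste, adapted to the weighted measure $\mu_f:=e^{-f}d\mu$. Throughout, the effective dimension is taken to be $m$ rather than $n$; this is precisely what forces the critical exponent $\frac{2m}{m-2}$ and the normalization $V^{-2/m}$.

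First I would record the weighted Bishop--Gromov comparison. Since $Ric_f^m\geq-(m-1)K$ controls the $m$-dimensional Bakry--\'Emery tensor, the weighted volume $V_f(B(x,r))=\int_{B(x,r)}e^{-f}d\mu$ obeys the same monotonicity as the Riemannian volume of an $m$-dimensional manifold with Ricci bounded below by $-(m-1)K$ (Qian; Wei--Wylie). In particular $r\mapsto V_f(B(x,r))/V^m_{-K}(r)$ is non-increasing, where $V^m_{-K}(r)$ is the model volume in the $m$-dimensional space form of curvature $-K$. Integrating this yields volume doubling on $B_0(R)$: there is $C_D=C_D(m)$ with $V_f(B(x,2r))\leq e^{C_D(1+\sqrt{K}r)}V_f(B(x,r))$ whenever $B(x,2r)\subset B_0(R)$. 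The exponential factor $e^{C\sqrt{K}r}$ here is exactly the source of the $e^{C(1+\sqrt{K}R)}$ prefactor in the final estimate.

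Next I would establish the weighted $L^2$ Neumann--Poincar\'e inequality: for every ball $B=B(x,r)\subset B_0(R)$ and $\phi\in C^\infty(B)$,
\[
\int_B|\phi-\overline{\phi}_B|^2\,e^{-f}d\mu\leq C_P\,r^2\int_B|\nabla\phi|^2\,e^{-f}d\mu,
\]
where $\overline{\phi}_B$ is the $\mu_f$-average and $C_P=C_P(m)e^{C(1+\sqrt{K}r)}$. This is obtained from the comparison theorem through the segment (Buser) inequality in the weighted setting: one bounds $|\phi(y)-\phi(z)|$ by an integral of $|\nabla\phi|$ along geodesics and uses Bishop--Gromov to control the resulting kernel by the weighted volume of $B$. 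With doubling and this Poincar\'e inequality in hand, Saloff-Coste's theorem (volume doubling plus a scale-invariant $L^2$ Poincar\'e inequality implies a scaled family of Sobolev inequalities) gives, for $\phi\in C^\infty_0(B_0(R))$,
\[
\left(\int_{B_0(R)}|\phi|^{\frac{2m}{m-2}}e^{-f}d\mu\right)^{\frac{m-2}{m}}\leq e^{C(1+\sqrt{K}R)}V^{-\frac{2}{m}}\int_{B_0(R)}(R^2|\nabla\phi|^2+\phi^2)\,e^{-f}d\mu,
\]
which is the asserted inequality. Equivalently, one may pass through the Nash inequality and the on-diagonal heat-kernel upper bound, which are known to be equivalent to this Sobolev statement.

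The main obstacle is quantitative rather than structural: one must track the constants so that (i) the effective Sobolev dimension emerging from the doubling exponent is exactly $m$, producing the exponent $\frac{2m}{m-2}$ and the normalization $V^{-2/m}$, and (ii) the accumulated doubling and Poincar\'e constants combine into the single clean factor $e^{C(1+\sqrt{K}R)}$ with $C=C(m)$ only. The first point is automatic because the weighted comparison is taken against the $m$-dimensional model, so $m$ is the natural dimension at every stage; the second requires verifying that the exponential growth of the doubling constant in $\sqrt{K}r$ does not degrade under the Saloff-Coste iteration, which holds since each doubling step contributes a factor governed by $\sqrt{K}r\leq\sqrt{K}R$. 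As this is exactly the constant bookkeeping carried out for the weighted setting in \cite{DD, zhaoyang}, I would cite those references for the detailed estimates and present the comparison--Poincar\'e--Saloff-Coste chain as the conceptual skeleton of the argument.
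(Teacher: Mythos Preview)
The paper does not supply its own proof of this lemma: it is stated with the attribution ``see \cite{DD, zhaoyang}'' and used as a black box. Your outline---weighted Bishop--Gromov comparison for $Ric_f^m\geq -(m-1)K$ giving volume doubling with exponential constant $e^{C(1+\sqrt{K}r)}$, a weighted Neumann--Poincar\'e inequality via the segment/Buser argument, and then Saloff-Coste's equivalence to obtain the scaled Sobolev inequality with effective dimension $m$---is exactly the standard route by which this inequality is established in the cited references, and your bookkeeping remarks about why the exponent is $\tfrac{2m}{m-2}$ and why the constants aggregate into $e^{C(1+\sqrt{K}R)}$ are correct.
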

Now, combining Theorem \ref{dl1} and Lemma \ref{sobolev22}, we derive some applications of Theorem \ref{dl1}. Note that when $F(u)=cu^\sigma$, for some $c\geq 0$ and $0\leq\sigma\leq p-1, p>1$, we have that $h(v)=c(p-1)^{p-1}e^{\left(\frac{\sigma}{p-1}-1\right)v}$. Hence
$$h'(v)=c(p-1)^{p-1}\left(\frac{\sigma}{p-1}-1\right)e^{\left(\frac{\sigma}{p-1}-1\right)v}\leq0.$$
Therefore, for $K=0$, letting $R$ tend to infinity in \eqref{main}, we obtain the following corollary.
\begin{corollary} Let  $(M,g,d\mu)$   be a smooth metric space with  $ Ric_{f}^{m}\geq 0$. If $u$ is a positive solution to equation $\Delta_{p,f}u+cu^\sigma=0$ and is defined globally on the space then $u$ must be constant.
\end{corollary}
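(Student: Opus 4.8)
\emph{Reduction.} Since $u>0$, the plan is first to linearize the gradient quotient by setting $u=e^{v/(p-1)}$, i.e. $v=(p-1)\log u$. Then $\nabla u=\frac{u}{p-1}\nabla v$, so $\frac{|\nabla u|}{u}=\frac{|\nabla v|}{p-1}$ and both \eqref{kq1} and \eqref{main} amount to an $L^\infty$ bound for $|\nabla v|$ on $B_0(R/2)$. Expanding $\Delta_{p,f}u=e^f\divg(e^{-f}|\nabla u|^{p-2}\nabla u)$ and using $u^{p-1}=e^{v}$, equation \eqref{eq1} transforms into the clean equation
$$\Delta_{p,f}v+|\nabla v|^p+h(v)=0,$$
with $h$ precisely the hypothesis function $h(v)=(p-1)^{p-1}e^{-v}F(e^{v/(p-1)})$; the assumptions $F\ge0$ and $h'\le a$ (with $a=0$ unless $p=2$) are exactly what make the two reaction terms manageable.

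\emph{Bochner inequality.} On the regular set $\{\nabla v\neq0\}$ I would apply the weighted $p$-Bochner (Reilly-type) formula to $w:=|\nabla v|^2$. Writing $\mathcal L$ for the linearization of $\Delta_{p,f}$ at $v$, this yields
$$\frac12\,\mathcal L w\ \ge\ |\nabla v|^{p-2}\Big(\frac1m(\Delta_f v)^2+{\rm Ric}^m_f(\nabla v,\nabla v)\Big)+\langle\nabla(\Delta_{p,f}v),\nabla v\rangle+(\text{l.o.t.}),$$
where the factor $\frac1m(\Delta_f v)^2$ is the refined Bakry--\'Emery estimate for ${\rm Ric}^m_f$. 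Substituting the transformed equation for $\Delta_{p,f}v$ converts $\langle\nabla(\Delta_{p,f}v),\nabla v\rangle$ into a favourable contribution from $|\nabla v|^p$ plus the harmless $-h'(v)|\nabla v|^2\ge-a|\nabla v|^2$. The only sign-indefinite term is the curvature one, which by definition of the negative part I bound below by $((n-1)K-({\rm Ric}^m_f)^K_-)\,|\nabla v|^2$.

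\emph{Integral estimate and iteration.} I would then test the resulting differential inequality against $\phi^2w^{\beta}$, where $\phi$ is a cutoff supported in $B_0(R)$ with $\phi\equiv1$ on $B_0(R/2)$ and $\beta$ is a free exponent, integrate against $e^{-f}d\mu$, and integrate by parts. The dangerous term $\intl_{B_0(R)}({\rm Ric}^m_f)^K_-\,\phi^2w^{\beta+\frac p2}e^{-f}d\mu$ is split by H\"older into an $L^q$-factor of the curvature, controlled by $\|{\rm Ric}_{-}^K\|_{q,r}$, times an $L^{q'}$-factor which (for $q>m/2$) I estimate by the Sobolev inequality \eqref{sobolev1}. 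The hypotheses $\|{\rm Ric}_{-}^K\|_{q,r}\le\frac1{bR^2}$ and $k(q,1)\le\frac1b$, with $b$ large, provide exactly the smallness --- the factor $R^2$ from Sobolev pairs with the $\frac1{bR^2}$ --- needed to absorb this term into the good gradient term. What remains is a self-improving inequality raising the integrability exponent of $w$ by the Sobolev factor $\frac{m}{m-2}$; Moser/De Giorgi--Nash--Moser iteration over dyadic radii then upgrades it to $\sup_{B_0(R/2)}w\le C_{p,m,V}\big((1+\sqrt{K}R)/R\big)^2+\varepsilon$, with $\varepsilon$ the residue of the absorbed curvature term.

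\emph{The obstacle, and the two regimes.} The main difficulty I anticipate is precisely this curvature absorption combined with the degeneracy of $\Delta_{p,f}$: one must work on $\{\nabla v\neq0\}$ with a regularization $|\nabla v|^2+\delta$, track every constant through the iteration so that the Sobolev constant and the smallness of $\|{\rm Ric}_{-}^K\|_{q,r}$ combine to a convergent product, and finally let $\delta\to0$. Because the $L^q$-curvature residue $\varepsilon$ cannot be made exactly zero, tuning $b$ (hence the smallness thresholds) against $\eta$ yields $\frac{|\nabla u|}{u}\le C_{p,m,V}\frac{1+\sqrt{K}R}{R}+\eta$, which is \eqref{kq1}. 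When $\|{\rm Ric}_{-}^K\|_{q,r}=0$, i.e. ${\rm Ric}^m_f\ge(n-1)K$ pointwise, the curvature term is nonpositive and needs no absorption, so no bound on $k(q,1)$ is required and the iteration closes with $\varepsilon=0$, giving the clean estimate \eqref{main}. The concluding Corollary is then immediate: for $F(u)=cu^\sigma$ we have $h'\le0$, so with ${\rm Ric}^m_f\ge0$ ($K=0$) estimate \eqref{main} reads $\frac{|\nabla u|}{u}\le C_{p,m}/R$ on $B_0(R/2)$, and letting $R\to\infty$ forces $\nabla u\equiv0$.
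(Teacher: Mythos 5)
Your proposal is correct and follows essentially the same route as the paper: verify that $h'(v)\le 0$ for $F(u)=cu^{\sigma}$ (so the hypothesis $h'\le a$ with $a=0$ holds), note that $Ric_f^m\ge 0$ gives both the Sobolev inequality (Lemma \ref{sobolev22}) and $\|{\rm Ric}_-^K\|_{q,r}=0$ with $K=0$, apply the clean estimate \eqref{main} to get $\frac{|\nabla u|}{u}\le C_{p,m}/R$ on $B_0(R/2)$, and let $R\to\infty$. The bulk of your write-up re-sketches the proof of Theorem \ref{dl1} itself (transformation, Bochner, absorption, Moser iteration), which is consistent with the paper's argument but not needed for the corollary, whose proof simply cites that theorem.
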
 
This corollary is a refinement of a result by Zhao and Yang in \cite{zhaoyang}. In fact, in Theorem 1.1 in \cite{zhaoyang}, the authors proved that
$$\frac{|\nabla u|}{u}\leq \frac{(1+\ \sqrt[]{K}R)^{3/4}}{R}$$
if ${\rm Ric}_m^f\geq-(n-1)K, K\geq0$. However, the above estimate should be corrected as \eqref{main}.

 We now give a gradient estimate for the Allen-Cahn equation. This equation has its origin in the gradient theory of phase transitions \cite{allen}, and has attracted a lot
of attention in the last decades. It also has the intricate connection to the minimal surface theory, for example, see \cite{cho, PR, pino} and the references there in. Our result now can be stated as follows.
		\begin{corollary} \label{hq1}
		Let $ (M,g,d\mu) $ be a smooth metric measure space with $ Ric_{f}^{m}\geq-(m-1)K $, $ K $ is a non-negative constant. If $ u $ is a solution of the equation
		\begin{equation*}
		\Delta_{p,f}u+u(1-u^{2})=0, \quad  p\geq 2,
		\end{equation*}
		satisfying $ 0<u\leq 1 $ on the ball $ B_{0}(R)\subset M $ then
		\begin{equation*} 
		\dfrac{|\nabla u|}{u}\leq C_{p,m}\dfrac{1+\sqrt{K}R}{R} 
		\end{equation*}
		on the ball  $ B_{0}(\frac{R}{2}) $, where $ C_{p,m} $ is a constant depending only on $ p $ and $ m $.
		In particular, when $ K=0 $, if $0<u\leq1$ in $M$, then $ u\equiv 1 $ on $ M $.
	\end{corollary}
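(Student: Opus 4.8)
The plan is to apply the sharp alternative \eqref{main} of Theorem \ref{dl1} to the nonlinearity $F(u)=u(1-u^{2})=u-u^{3}$, after checking that it meets the structural hypotheses on the range where the solution lives. First I would record the sign condition: since $F(u)=u(1-u)(1+u)\ge 0$ whenever $0\le u\le 1$, we have $F(u)\ge 0$ on the interval $(0,1]$ in which $u$ takes values. The function $h$ is built from the substitution $u=e^{v/(p-1)}$, under which the constraint $0<u\le 1$ is exactly $v\le 0$; hence it suffices to verify $h'(v)\le a$ only for $v\le 0$.

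The key computation is the sign of $h'$. Writing $\alpha=\tfrac{2-p}{p-1}$ and $\beta=\tfrac{4-p}{p-1}$, a direct substitution gives $h(v)=(p-1)^{p-1}\left(e^{\alpha v}-e^{\beta v}\right)$, so that
\begin{equation*}
h'(v)=(p-1)^{p-1}e^{\beta v}\left(\alpha\, e^{(\alpha-\beta)v}-\beta\right),\qquad \alpha-\beta=-\tfrac{2}{p-1}<0.
\end{equation*}
I would then split according to the size of $p$. For $2\le p\le 4$ one has $\alpha\le 0$ and $\beta\ge 0$, so both terms in the bracket are nonpositive and $h'(v)\le 0$ for every $v$ (the case $p=2$, where $h(v)=1-e^{2v}$ and $h'(v)=-2e^{2v}\le 0$, is the consistency check). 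For $p>4$ both $\alpha,\beta<0$, and here the restriction $v\le 0$ is used: then $(\alpha-\beta)v\ge 0$, so $e^{(\alpha-\beta)v}\ge 1$ and $\alpha\, e^{(\alpha-\beta)v}\le\alpha$, whence the bracket is at most $\alpha-\beta<0$. In every case $h'(v)\le 0$ on the relevant range $v\le 0$, so the hypotheses of Theorem \ref{dl1} hold with $a=a(p)=0$. I expect this case split to be the one delicate point, precisely because the bound $h'(v)\le 0$ for $p>4$ genuinely requires $0<u\le 1$, i.e. $v\le 0$, and fails for large positive $v$.

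With the structural hypotheses in hand, I would invoke Lemma \ref{sobolev22}: the assumption $Ric_{f}^{m}\ge-(m-1)K$ supplies the weighted Sobolev inequality \eqref{sobolev1}, and the pointwise lower Ricci bound simultaneously forces the integral curvature deficit to vanish, $\|{\rm Ric}_{-}^{K}\|_{q,r}=0$. Thus the second alternative of Theorem \ref{dl1} applies and yields
\begin{equation*}
\frac{|\nabla u|}{u}\le C_{p,m}\,\frac{1+\sqrt{K}R}{R}
\end{equation*}
on $B_{0}(R/2)$, with $C_{p,m}$ depending only on $p$ and $m$.

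Finally, for the Liouville conclusion I would set $K=0$, so the estimate reads $\tfrac{|\nabla u|}{u}\le C_{p,m}/R$ on $B_{0}(R/2)$ whenever $u$ solves the equation globally on $M$. Fixing $x\in M$ and letting $R\to\infty$ forces $|\nabla u(x)|=0$, so $u$ is constant. A constant $u$ satisfies $\Delta_{p,f}u=0$, so the equation collapses to $u(1-u^{2})=0$; since $0<u\le 1$, the only admissible value is $u\equiv 1$, which completes the proof.
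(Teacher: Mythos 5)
Your proposal is correct and takes essentially the same route as the paper: check $F\ge 0$ and $h'\le 0$ on the range $v\le 0$ forced by $0<u\le 1$, then apply Lemma \ref{sobolev22} together with the sharp alternative \eqref{main} of Theorem \ref{dl1}, and obtain the Liouville conclusion by letting $R\to\infty$ when $K=0$. The only (cosmetic) difference is that you verify $h'\le 0$ by writing $h$ explicitly as exponentials and splitting at $p=4$, while the paper performs the equivalent computation in the $u$-variable, namely $\frac{F'(u)u}{p-1}-F(u)=\frac{u}{p-1}\left((p-4)u^{2}-(p-2)\right)\le 0$, which hides the same case distinction.
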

	Note that for $p=2$, this kind of Liouville type theorem was verified by S. B. Hou in \cite{Hou}. This corollary can be considered as a generalization of those in \cite{Hou} in the non-linear setting. It is also worth to emphasize that the above gradient is new, even for $p=2$.
	
	The second application is a new gradient estimate for the Fisher-KPP equation.
	\begin{corollary} \label{hq2}
		Let $ (M,g,d\mu) $ be a smooth metric space with $ Ric_{f}^{m}\geq-(m-1)K $, constant $ K\geq 0 $. If $ u $ is a positive solution of the equation
		\begin{equation*}
		\Delta_{p,f}u+cu(1-u)=0, \quad p\geq 2, c>0,
		\end{equation*}
		on the geodesic ball $ B_{0}(R)\subset M $, $u \le 1$ in $M$ then
		\begin{equation*} 
		\dfrac{|\nabla u|}{u}\leq C_{p,m}\dfrac{1+\sqrt{K}R}{R} 
		\end{equation*}
		on the geodesic ball  $ B_{0}(\frac{R}{2}) $, with $ C_{p,m} $ only depends on $ p $ and $ m $. When $ K=0 $ then $ u\equiv 1 $ on $ M $.
	\end{corollary}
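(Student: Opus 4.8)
The plan is to derive Corollary \ref{hq2} as a direct application of Theorem \ref{dl1}, the only work being to check that the Fisher-KPP nonlinearity $F(u)=cu(1-u)$ meets the structural hypotheses on the range of values actually taken by $u$, and that the pointwise curvature bound puts us in the clean case $\|\RicH\|_{q,r}=0$ of that theorem. I would organize the argument as: (i) read off the two curvature consequences; (ii) verify the conditions on $F$ and $h$; (iii) invoke \eqref{main}; (iv) let $R\to\infty$ for the Liouville conclusion.

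First I would record what $Ric_f^m\geq-(m-1)K$ buys us. On the one hand Lemma \ref{sobolev22} applies verbatim and produces the weighted Sobolev inequality \eqref{sobolev1} (with $C_1=1$, $C_2=C$, $C_3=2/m$), so the Sobolev hypothesis of Theorem \ref{dl1} holds. On the other hand a genuine pointwise lower bound means the $m$-dimensional Bakry-\'{E}mery curvature never drops below the bound, so the integral deficit $\|\RicH\|_{q,r}$ vanishes; by the remark following Theorem \ref{dl1} we then land in the second case and need impose no condition on $k(q,1)$.

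Next I would check the nonlinearity. Since $0<u\leq 1$ and $c>0$ we have $F(u)=cu(1-u)\geq 0$, so the sign condition holds on the range of $u$; note that the bound $u\leq1$ is exactly what makes $F\geq0$. Writing $v=(p-1)\ln u$, the constraint $0<u\leq1$ forces $v\leq0$, so it suffices to control $h'$ for $v\leq0$. A direct computation gives
\begin{equation*}
h(v)=(p-1)^{p-1}e^{-v}F\!\left(e^{v/(p-1)}\right)=c(p-1)^{p-1}\left(e^{-\alpha v}-e^{-\beta v}\right),\qquad \alpha=\frac{p-2}{p-1},\ \ \beta=\frac{p-3}{p-1},
\end{equation*}
so that $h'(v)=c(p-1)^{p-1}\bigl(\beta e^{-\beta v}-\alpha e^{-\alpha v}\bigr)$. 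For $2\leq p\leq 3$ one has $\alpha\geq0\geq\beta$, hence both summands are nonpositive and $h'(v)\leq0$ for every $v$; for $p>3$ one has $\alpha>\beta>0$, and for $v\leq0$ the inequality $e^{(\alpha-\beta)v}\leq1<\alpha/\beta$ rearranges precisely to $\beta e^{-\beta v}\leq\alpha e^{-\alpha v}$, i.e. $h'(v)\leq0$. Thus $h'(v)\leq0$ on the relevant range and we may take $a=a(p)=0$. I expect this computation to be the main point to get right: for $p>3$ the inequality $h'\leq0$ genuinely fails for large positive $v$, so the hypothesis $u\leq1$ is used not merely for the sign of $F$ but also to keep $v$ in the region where $h'\leq0$.

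With every hypothesis of Theorem \ref{dl1} verified, estimate \eqref{main} yields $|\nabla u|/u\leq C_{p,m}(1+\sqrt{K}R)/R$ on $B_0(R/2)$. For the Liouville statement I take $K=0$ and consider a solution defined on all of a complete noncompact $M$: the estimate collapses to $|\nabla u|/u\leq C_{p,m}/R$, and letting $R\to\infty$ gives $\nabla u\equiv0$, so $u$ is a positive constant $\gamma$. Substituting into the equation gives $\Delta_{p,f}\gamma=0$ (since $p\geq2$ and $\nabla\gamma=0$) and hence $c\gamma(1-\gamma)=0$; as $\gamma>0$ and $c>0$, this forces $\gamma=1$, i.e. $u\equiv1$ on $M$.
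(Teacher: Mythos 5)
Your proposal is correct and follows essentially the same route as the paper: verify that the Fisher--KPP nonlinearity gives $h'\leq 0$ on the range of values actually taken by $v=(p-1)\log u$ (the paper does this equivalently in the $u$-variable, showing $\frac{F'(u)u}{p-1}-F(u)=\frac{cu}{p-1}\left((p-3)u-(p-2)\right)\leq 0$ for $0<u\leq 1$, $p\geq 2$), then invoke Theorem \ref{dl1} with $\|{\rm Ric}_{-}^K\|_{q,r}=0$ via Lemma \ref{sobolev22}, and let $R\to\infty$ when $K=0$ to force $u$ constant and hence $u\equiv 1$. Your observation that $u\leq 1$ is needed not only for $F\geq 0$ but also to keep $h'\leq 0$ when $p>3$ is exactly the content of the paper's restriction of its sign computation to $0<u\leq 1$.
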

	The equation in Corollary \ref{hq2} was proposed by Fisher in 1937 to describe the propagation of an evolutionarily advantageous gene in a population \cite{Fisher}, and was also independently described in a seminal paper by Kolmogorov, Petrovskii, and Piskunov in the same year \cite{KKP}. In \cite{Cao}, the authors derived several differential Harnack estimates for positive solutions of Fisher’s equation. 
	
	The third application is the below Liouville result.
	\begin{corollary} \label{hq31}
		Let $ (M,g,d\mu) $ be a smooth metric space with $ Ric_{f}^{m}\geq-(m-1)K $, $ K\geq 0 $. If $ u\geq1 $ is a solution of the equation
		\begin{equation}\label{soliton}
		\Delta_{f}u+au\log u=0, \quad a\geq0,
		\end{equation}
		on the geodesic ball $ B_{0}(R)\subset M $, then
		\begin{equation*} 
		\dfrac{|\nabla u|}{u}\leq C_{p,m}\dfrac{1+\sqrt{K}R}{R} 
		\end{equation*}
		on the geodesic ball  $ B_{0}(\frac{R}{2}) $, with $ C_{p,m} $ only depends on $ p $ and $ m $. When $ K=0 $ and $u\geq 1$ in $M$ then $ u\equiv 1 $ in $ M $.
	\end{corollary}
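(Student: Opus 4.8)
The plan is to recognize this as the special case of Theorem \ref{dl1} with $p = 2$ and $F(u) = au\log u$, and then run a limiting argument for the Liouville part. First I would note that for $p = 2$ the weighted operator $\Delta_{2,f}$ reduces to $\Delta_f$, so equation \eqref{soliton} is exactly \eqref{eq1} for this choice of $F$. Before invoking the theorem I must verify its standing hypotheses. Since the assumption is $u \geq 1$, we have $\log u \geq 0$, hence $F(u) = au\log u \geq 0$ for $a \geq 0$; this is precisely the reason the constraint $u \geq 1$ is imposed. Next I would compute the auxiliary function $h(v) = (p-1)^{p-1}e^{-v}F(e^{v/(p-1)})$: with $p = 2$ this gives $h(v) = e^{-v}\cdot a e^{v}\log(e^{v}) = av$, so $h'(v) = a$. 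Thus $h'(v) \leq a = a(2)$ holds with equality, which is admissible under the convention that $a(p)$ may be positive exactly when $p = 2$.

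With the hypotheses in hand, the Sobolev inequality is automatic: the curvature assumption $Ric_f^m \geq -(m-1)K$ is precisely the hypothesis of Lemma \ref{sobolev22}, which supplies the weighted Sobolev inequality \eqref{sobolev1}. Moreover, a genuine pointwise lower Ricci bound means that the integral deviation of the curvature below its threshold vanishes, so by the remark following Theorem \ref{dl1} we fall into the sharp regime in which estimate \eqref{main} holds directly, with no $\eta$ correction term and no smallness requirement on $k(q,1)$. This yields
\begin{equation*}
\frac{|\nabla u|}{u} \leq C_{p,m}\frac{1+\sqrt{K}R}{R}
\end{equation*}
on $B_0(R/2)$, which is the first assertion of the corollary.

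For the Liouville statement I would set $K = 0$ and use that $u$ now solves the equation on all of $M$ with $u \geq 1$ everywhere. The estimate then reads $|\nabla u|/u \leq C_{p,m}/R$ on $B_0(R/2)$ for every $R > 0$; letting $R \to \infty$ forces $|\nabla u| \equiv 0$, so $u$ is a constant, say $u \equiv c$ with $c \geq 1$. Substituting back into \eqref{soliton} gives $ac\log c = 0$, and since $c \geq 1 > 0$ this forces $\log c = 0$, i.e. $c = 1$, whenever $a > 0$. Hence $u \equiv 1$.

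The step I expect to be most delicate is the hypothesis check rather than the analysis, since the analytic content is entirely inherited from Theorem \ref{dl1}. One must be careful that $F(u) = au\log u$ is nonnegative only on $\{u \geq 1\}$, so the estimate genuinely requires the constraint $u \geq 1$, and that the borderline computation $h'(v) = a$ is admissible only because $p = 2$ (for $p \neq 2$ the convention forces $a(p) = 0$, which $h' = a$ would violate when $a > 0$). The final identification $c = 1$ also relies on $a > 0$; when $a = 0$ the equation degenerates to $\Delta_f u = 0$ and the argument yields only that $u$ is constant.
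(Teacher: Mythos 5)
Your proposal is correct and follows essentially the same route as the paper: check that with $p=2$ and $F(u)=au\log u$ one has $h(v)=av$ and $h'(v)=a\geq 0$ (admissible precisely because $p=2$), note that $u\geq1$ guarantees $F(u)\geq0$ on the range of the solution, combine Lemma \ref{sobolev22} with Theorem \ref{dl1} to obtain \eqref{main}, and let $R\to\infty$ for the Liouville conclusion. Your write-up is in fact more complete than the paper's (which stops after the hypothesis check and says the rest follows from Theorem \ref{dl1}), and your caveat that the final identification $u\equiv1$ requires $a>0$ --- since for $a=0$ any constant $c\geq1$ solves $\Delta_f u=0$ --- is a valid observation that the paper's statement glosses over.
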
  
Note that equation \eqref{soliton} originated from gradient Ricci solitons. We refer the reader to \cite{DD1} for further explaination. It is worth to mention that in \cite{DK, wu}, the authors showed that there does not exist positive solution satisfying $0<u\leq c< 1$ for some $c\in\mathbb{R}$. 

	The paper has three sections. Beside this section, we prove Theorem \ref{dl1} in the Section 2. As its applications, we derive proof of corollaries in the Section 3 and point our some local gradient estimate under integral Ricci curvature condition. 
	
	\noindent
\textbf{Acknowledgment: } This work was initiated during a visit of the second author to HongKong Univeristy of Science and Technology (HKUST) and Vietnam Institute for Advanced Study in Mathematics (VIASM). He would like to thank Tianling Jin (HKUST) and VIASM for their kind invitation and support. %The research of the second author was partial supported by NAFOSTED under grant number 101.02-2017.313.				
		%-----
\section{Gradient estimate with a Sobolev inequality and integral Ricci bounds}
\setcounter{equation}{0}	
Since the equation	 \eqref{eq1} can be either degenerate or singular in the set $\{\nabla u=0\}$, the elliptic regular theory may not be applied. It is well known that the best regular properties of the solution of this kind of equations is $C^{1, \alpha}$, for some $0<\alpha<1$. As in \cite{zhaoyang} (see also \cite{Hard, WZ}),  using an $\varepsilon$-regularization technique by replacing the linearized operator $\mathcal{L}_f$ (see below definition) with its approximate, we can assume that $u$ is smooth. Therefore, in order to avoid tedious presentation, throughout this paper, for simplicity, we assume that $ u $ is a positive $\mathcal{C}^2$-solution of \eqref{eq1}. Put
	\begin{equation*}
	v=(p-1)\mathrm{log}u, \quad w=|\nabla v|^{2}.
	\end{equation*}
To prove Theorem \ref{dl1}, we need to use the following operator.
	\begin{define}[\cite{wangli, WZ}] Linearization operator of the weighted $ p $-Laplacian corresponding with $ u\in \mathcal{C}^{2}(M)$ such that $ \nabla u \neq 0 $ is defined as follows
		\begin{equation*}
		\mathcal{L}_{f}(\psi)=e^{f}\mathrm{div}(e^{-f}|\nabla u|^{p-2}A(\nabla\psi)),
		\end{equation*}
		where $ \psi $ is a smooth function on $ M $ and $ A $ is a tensor defined by
		\begin{equation*}
		A=\mathrm{Id}+(p-2)\dfrac{\nabla u \otimes \nabla u}{|\nabla u|^{2}}.
		\end{equation*}
	\end{define}  
	
	\begin{lemma}\label{bd3} (\cite{wangli}) Let $ (M, g, d\mu) $ be a smooth metric space and function $ u\in \mathcal{C}^{3}(M) $. Then, if  $ |\nabla u|\neq 0 $, then
		\begin{equation*}
		\mathcal{L}_{f}(|\nabla u|^{p})=p|\nabla u|^{2p-4}\left( \left| \mathrm{Hess } u \right| _{A}^{2}+\mathrm{Ric}_{f}(\nabla u, \nabla u)\right) +p|\nabla u|^{p-2}\left\langle\nabla u,  \nabla \Delta_{p,f} u \right\rangle  .
		\end{equation*}
		where $ |\mathrm{Hess} u|_{A}^{2}=A^{ik}A^{jl}u_{ij}u_{kl} $ and $ A $ are defined as above.
	\end{lemma}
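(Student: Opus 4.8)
The plan is to establish this weighted $p$-Bochner identity by a direct covariant computation, peeling off the $f$-terms from a purely Riemannian (unweighted) calculation. First I would apply the elementary identity $e^{f}\divg(e^{-f}X)=\divg(X)-\langle\nabla f,X\rangle$ with $X=|\nabla u|^{p-2}A(\nabla|\nabla u|^p)$, which rewrites $\mathcal{L}_f(|\nabla u|^p)=\divg\big(|\nabla u|^{p-2}A(\nabla|\nabla u|^p)\big)-|\nabla u|^{p-2}\langle\nabla f,A(\nabla|\nabla u|^p)\rangle$. This isolates a divergence term, to be handled by the standard unweighted Bochner machinery, and a first-order term that will later conspire to convert $\Ric$ into $\Ric_f$ and $\Delta_p u$ into $\Delta_{p,f}u$.

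Working in a local orthonormal frame, writing $u_i,u_{ij},u_{ijk}$ for the covariant derivatives and $H=|\nabla u|^2$, I would first record $\nabla_i|\nabla u|^{p}=p|\nabla u|^{p-2}u_{ij}u_j$, so that $A(\nabla|\nabla u|^p)$ has components $p|\nabla u|^{p-2}\big(u_{kj}u_j+(p-2)H^{-1}u_k u_i u_{ij}u_j\big)$. Expanding the divergence by the product rule over the three factors $|\nabla u|^{p-2}$, $A$, and $\nabla|\nabla u|^p$ produces three families of terms: those quadratic in the Hessian, those carrying one third derivative $u_{ijk}$, and first-order remainders. The quadratic-in-Hessian terms I expect to reassemble exactly into $p|\nabla u|^{2p-4}|\Hess u|_A^2$, with $|\Hess u|_A^2=|u_{ij}|^2+2(p-2)H^{-1}|u_{ij}u_j|^2+(p-2)^2 H^{-2}(u_{ij}u_iu_j)^2$; this is a purely algebraic verification of the contraction $A^{ik}A^{jl}u_{ij}u_{kl}$.

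The crux is the third-derivative terms. Here I would invoke the Ricci commutation identity to exchange the order of covariant differentiation in combinations of the form $u_j\,\nabla_k\nabla_j\nabla_k u$; the commutator is a Riemann-tensor term which, because of the repeated contracted index and the contraction against $\nabla u$, collapses to $p|\nabla u|^{2p-4}\Ric(\nabla u,\nabla u)$, while the commuted symmetric-order third derivatives recombine into $p|\nabla u|^{p-2}\langle\nabla u,\nabla\Delta_p u\rangle$, where $\Delta_p u=\divg(|\nabla u|^{p-2}\nabla u)=|\nabla u|^{p-2}A_{jk}u_{jk}$. Finally I would restore the weight: adding $-|\nabla u|^{p-2}\langle\nabla f,A(\nabla|\nabla u|^p)\rangle$ and using $\Delta_{p,f}u=\Delta_p u-|\nabla u|^{p-2}\langle\nabla f,\nabla u\rangle$, a short cancellation (the mixed $f_{ij}u_iu_j$ contributions cancel, leaving only $\Hess f$ contracted with $\nabla u$) upgrades $\Ric$ to $\Ric_f=\Ric+\Hess f$ and $\Delta_p u$ to $\Delta_{p,f}u$, giving the stated formula. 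I expect the main obstacle to be the bookkeeping between the second and third families of terms: cleanly deciding which $u_{ijk}$ contractions belong to $\langle\nabla u,\nabla\Delta_{p,f}u\rangle$ and which feed the curvature term, since both carry the same third-derivative monomials and only the Ricci identity disentangles them.
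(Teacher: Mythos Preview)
The paper does not prove this lemma at all: it is quoted verbatim from \cite{wangli} (Wang--Li) and used as a black box, so there is no ``paper's own proof'' to compare against. Your proposal, by contrast, supplies an actual argument, and the strategy---reduce to the unweighted $p$-Bochner formula via $e^{f}\divg(e^{-f}X)=\divg X-\langle\nabla f,X\rangle$, expand $\divg\bigl(|\nabla u|^{p-2}A(\nabla|\nabla u|^{p})\bigr)$ in a local frame, use the Ricci commutation identity to separate $|\Hess u|_{A}^{2}+\Ric(\nabla u,\nabla u)$ from $\langle\nabla u,\nabla\Delta_{p}u\rangle$, then restore the weight---is sound and is the standard route in this literature.

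One point of exposition in your last paragraph is slightly garbled and worth cleaning up before you write it out. The correction term $-|\nabla u|^{p-2}\langle\nabla f,A(\nabla|\nabla u|^{p})\rangle$ contains only \emph{first} derivatives of $f$, no $f_{ij}$. The $\Hess f$ contribution that upgrades $\Ric$ to $\Ric_{f}$ does not come from there; it comes from expanding $p|\nabla u|^{p-2}\langle\nabla u,\nabla(\Delta_{p,f}u-\Delta_{p}u)\rangle=-p|\nabla u|^{p-2}\langle\nabla u,\nabla(|\nabla u|^{p-2}\langle\nabla f,\nabla u\rangle)\rangle$, whose product-rule expansion yields exactly $-p|\nabla u|^{2p-4}f_{ij}u_{i}u_{j}$ plus first-order terms in $\nabla f$. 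That $\Hess f$ piece is precisely $p|\nabla u|^{2p-4}\Hess f(\nabla u,\nabla u)$ needed on the curvature side, and the leftover $\nabla f$ terms then match the correction term identically. So nothing ``cancels'' among the $f_{ij}$ terms; rather, the single $\Hess f$ term migrates from the $\Delta_{p,f}$ side to the curvature side. With that bookkeeping straightened out, your outline goes through.
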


	To estimate the Hessian term, we need the following lemma.
	
	\begin{lemma} \label{bdhessA}
		For $ v=(p-1)\mathrm{log}u, w=|\nabla v|^2 $, and $ \alpha=\min \left\lbrace 2(p-1), \frac{m(p-1)^{2}}{m-1} \right\rbrace  $, let 
		$$ h(v)=(p-1)^{p-1}e^{-v}F(e^{\frac{v}{p-1}}),$$
then we have
		\begin{equation*}
		\begin{aligned}
		|\mathrm{Hess} v|_{A}^{2}  \geq & \dfrac{\alpha}{4}\dfrac{|\nabla w|^{2}}{w} + \dfrac{w^{2}}{m-1}(1+hw^{\frac{-p}{2}})^{2} \\
		&+ \dfrac{p-1}{m-1}(1+hw^{\frac{-p}{2}})\left\langle \nabla v, \nabla w \right\rangle -  \dfrac{(f_{1}v_{1})^{2}}{m-n}.
		\end{aligned}
		\end{equation*}
	\end{lemma}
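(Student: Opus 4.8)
The plan is to convert the statement into a pointwise inequality by first turning the PDE \eqref{eq1} into a scalar relation for $v$. Writing $u=e^{v/(p-1)}$, so that $\nabla u=\frac{u}{p-1}\nabla v$ and the tensor $A$ depends only on the (common) direction of $\nabla u$ and $\nabla v$, I would expand $\Delta_{p,f}u=e^{f}\mathrm{div}(e^{-f}|\nabla u|^{p-2}\nabla u)$ and substitute into $\Delta_{p,f}u+F(u)=0$. After dividing by the common positive factor $\tfrac{e^{v}}{(p-1)^{p-1}}w^{(p-2)/2}$ and using the definition of $h$, this reduces to
\begin{equation*}
\Delta_f v + w + \frac{p-2}{2}\frac{\langle\nabla v, \nabla w\rangle}{w} = -h(v)\, w^{(2-p)/2}. \qquad (\ast)
\end{equation*}
This is the only place where the hypotheses on $F$ enter, and it is the bridge between the equation and the Hessian.

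Next I would fix a point with $\nabla v\neq 0$ and choose an orthonormal frame $\{e_1,\dots,e_n\}$ with $e_1=\nabla v/|\nabla v|$, so that $v_1=\sqrt{w}$ and $v_i=0$ for $i\ge 2$. In this frame $A=\mathrm{diag}(p-1,1,\dots,1)$, hence
\begin{equation*}
|\mathrm{Hess}\, v|_A^2 = (p-1)^2 v_{11}^2 + 2(p-1)\sum_{j\geq 2}v_{1j}^2 + \sum_{i,j\geq 2}v_{ij}^2.
\end{equation*}
Differentiating $w=\sum_k v_k^2$ gives $w_i=2\sqrt{w}\,v_{1i}$, whence $\tfrac{|\nabla w|^2}{4w}=\sum_i v_{1i}^2=v_{11}^2+\sum_{j\ge2}v_{1j}^2$ and $\langle\nabla v,\nabla w\rangle=2w\,v_{11}$. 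Evaluating $(\ast)$ in the same frame and using $\tfrac{p-2}{2}\tfrac{\langle\nabla v,\nabla w\rangle}{w}=(p-2)v_{11}$ turns the equation into the scalar constraint
\begin{equation*}
(p-1)v_{11} + \sum_{i\geq 2}v_{ii} = f_1 v_1 - w\left(1 + h w^{-p/2}\right). \qquad (\ast\ast)
\end{equation*}

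The core is then purely algebraic. I would split off $\frac{\alpha}{4}\frac{|\nabla w|^2}{w}=\alpha v_{11}^2+\alpha\sum_{j\ge2}v_{1j}^2$ from the Hessian expression; the off-diagonal remainder $[2(p-1)-\alpha]\sum_{j\ge2}v_{1j}^2$ is nonnegative precisely because $\alpha\le 2(p-1)$, so it is dropped. For the orthogonal block I use $\sum_{i,j\ge2}v_{ij}^2\ge \frac{1}{n-1}S^2$ with $S=\sum_{i\ge2}v_{ii}$, followed by the Bakry--\'Emery dimension-improvement inequality (with $P=f_1v_1$, and $m>n$)
\begin{equation*}
\frac{S^2}{n-1} + \frac{P^2}{m-n} \geq \frac{(S-P)^2}{m-1}.
\end{equation*}
By $(\ast\ast)$ one has $S-P=-\big((p-1)v_{11}+w(1+hw^{-p/2})\big)$, so expanding $(S-P)^2$ produces exactly $\frac{w^2(1+hw^{-p/2})^2}{m-1}$, the cross term $\frac{p-1}{m-1}(1+hw^{-p/2})\langle\nabla v,\nabla w\rangle$, and a residual $\frac{(p-1)^2}{m-1}v_{11}^2$. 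Collecting terms, the remaining $v_{11}^2$-coefficient is $(p-1)^2\big(\frac{m}{m-1}-\frac{\alpha}{(p-1)^2}\big)$, which is nonnegative precisely because of the second branch $\alpha\le\frac{m(p-1)^2}{m-1}$; discarding it yields the asserted inequality, the term $-\frac{(f_1v_1)^2}{m-n}$ being the contribution $-\frac{P^2}{m-n}$ above.

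I expect the main obstacle to be bookkeeping rather than conceptual: one must carry $h(v)$ and $\langle\nabla v,\nabla w\rangle$ through $(\ast)$ and $(\ast\ast)$ without sign errors and recognize that the two branches defining $\alpha$ match the two Cauchy--Schwarz losses (the mixed $v_{1j}$-block and the orthogonal trace block). The point worth isolating is that the correction $-\frac{(f_1v_1)^2}{m-n}$ is not a genuine loss but the algebraic price of upgrading the $n$-dimensional Cauchy--Schwarz on the orthogonal Hessian block to the $(m-1)$-denominator, which is what later produces the $m$-dimensional Bakry--\'Emery curvature $\mathrm{Ric}_f^m$ in the Bochner formula of Lemma~\ref{bd3}.
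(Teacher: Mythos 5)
Your proposal is correct and follows essentially the same route as the paper's proof: the same reduction of \eqref{eq1} to a scalar equation for $v$, the same orthonormal frame in which $A=\mathrm{diag}(p-1,1,\dots,1)$, the same Cauchy--Schwarz bound $\sum_{i,j\ge2}v_{ij}^2\ge\frac{1}{n-1}\big(\sum_{i\ge2}v_{ii}\big)^2$, and the same dimension-improvement step (the paper's $(x-y)^2\ge\frac{x^2}{1+\delta}-\frac{y^2}{\delta}$ with $\delta=\frac{m-n}{n-1}$ is exactly your $\frac{S^2}{n-1}+\frac{P^2}{m-n}\ge\frac{(S-P)^2}{m-1}$ applied to $S-P=-\big((p-1)v_{11}+w(1+hw^{-p/2})\big)$). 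The only differences are presentational: you perform the $\alpha$-split up front and track the residual $v_{11}^2$-coefficient $\frac{m(p-1)^2}{m-1}-\alpha\ge0$ explicitly, whereas the paper absorbs both branches of $\alpha$ at the end; the content is identical.
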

	
	\begin{proof}
		Substituting $ v $ into the equation $ \eqref{eq1} $, we obtain
		\begin{equation*}
		\begin{aligned}
		0=\Delta_{p,f}u+F(u) &=e^{f}\mathrm{div}(e^{-f}|\nabla e^{\frac{v}{p-1}} |^{p-2} \nabla e^{\frac{v}{p-1}})+F(e^{\frac{v}{p-1}}) \\
		&=(p-1)^{1-p}e^{v}(|\nabla v|^{p}+\Delta_{p,f}v)+F(e^{\frac{v}{p-1}}).
		\end{aligned}
		\end{equation*}
		Hence 
		\begin{align}  \label{eq2}
		\Delta_{p,f}v&=-(p-1)^{p-1}e^{-v}F(e^{\frac{v}{p-1}})-|\nabla v|^{p}\notag\\
&=-h(v)-w^\frac{p}{2}.
		\end{align}
		By the definition of the weighted $ p $-Laplacian, this implies
	
	\begin{equation}\label{eq3}
%	\begin{aligned} \label{eq3}
	w^{\frac{p-2}{2}}\Delta_{f}v+\frac{p-2}{2}\left\langle \nabla w,  \nabla v \right\rangle w^{\frac{p-4}{2}}%=& -(p-1)^{p-1}e^{-v}F(e^{\frac{v}{p-1}})		-w^{\frac{p}{2}}\\
		=-h-w^{\frac{p}{2}}.
%		\end{aligned}
		\end{equation}
		We need to estimate $ |\mathrm{Hess} v|_{A}^{2} $ at points where $ w>0 $. Choose a local orthogonal basis $ \left\lbrace e_i \right\rbrace ^{n}_{i=1} $ near a given point such that $ \nabla v=|\nabla v|e_{1} $. We use $\nabla_{e_i}w=w_i, i=\overline{1,n}$ then $ w=v_{1}^{2}, w_{1}=2v_{i1}v_{i}=2v_{11}v_{1} $, when $ j\geq 2, w_{j}=2v_{j1}v_{1} $. Therefore, $ 2v_{j1}=\dfrac{w_{j}}{w^{\frac{1}{2}}} $, $ \left\langle \nabla f, \nabla v\right\rangle =f_{1}v_{1} $. Hence \eqref{eq3} leads to
		\begin{equation} \label{eq4}
		\begin{aligned} 
		\sum_{j=2}^{n}v_{jj}&=-hw^{1-\frac{p}{2}}-(\frac{p}{2}-1)\dfrac{w_{1}v_{1}}{w}-v_{11}+f_{1}v_{1}-w\\
		&=-hw^{1-\frac{p}{2}}-(p-1)v_{11}+f_{1}v_{1}-w.
		\end{aligned}
		\end{equation}
		From the definition of matrix $ A $, we have
		\begin{equation*}
		|\mathrm{Hess} v|_{A}^{2}=|\mathrm{Hess} v|^{2} + \dfrac{(p-2)^{2}}{4w^{2}}\left\langle \nabla v, \nabla w \right\rangle ^{2} + \dfrac{p-2}{2w}|\nabla w|^{2}.
		\end{equation*}
		Using the Cauchy-Schwarz inequality, we obtain
		\begin{align*}
		|\mathrm{Hess} v|_{A}^{2}
		=& \sum_{i,k=1}^{n}v_{ij} + (p-2)^{2}v_{11}^{2} + 2(p-2) \sum_{k=1}^{n}v_{1k}^{2}\\
		=& (p-1)^{2}v_{11}^{2} + 2(p-1)\sum_{k=2}^{n}v_{1k}^{2}+\sum_{i,k=2}^{n}v_{ik}^{2}\\
		&\geq (p-1)^{2}v_{11}^{2} + 2(p-1)\sum_{k=2}^{n}v_{1k}^{2}+ \dfrac{1}{n-1}\left( \sum_{j=2}^{n}v_{jj}\right)^{2}. 
		\end{align*}
		Substituting \eqref{eq4} into the above inequality, we have
		\begin{align*}
		|\mathrm{Hess} v|_{A}^{2}\geq & (p-1)^{2}v_{11}^{2} + 2(p-1)\sum_{k=2}^{n}v_{1k}^{2}\\
		& + \dfrac{1}{n-1}(-hw^{1-\frac{p}{2}}-(p-1)v_{11}+f_{1}v_{1}-w)^{2}.
		\end{align*}
		Using inequality $ (x-y)^{2}\geq \frac{x^{2}}{1+\delta}-\frac{y^{2}}{\delta} $ for $x=hw^{1-\frac{p}{2}}+w+(p-1)v_{11} $, $y=f_{1}v_{11} $, we have% and $ \delta=\frac{m-n}{n-1} > 0$, we have
		\begin{equation*}
		\begin{aligned}
		&\dfrac{1}{n-1}(-hw^{1-\frac{p}{2}}-(p-1)v_{11}+f_{1}v_{1}-w)^{2}\\
		&\geq \dfrac{(hw^{1-\frac{p}{2}}+w)^{2}+2(p-1)v_{11}(hw^{1-\frac{p}{2}}+w)}{m-1}
		+\dfrac{(p-1)^{2}}{m-1}v_{11}^{2}-\dfrac{(f_{1}v_{1})^{2}}{m-n}
		\end{aligned}
		\end{equation*}
		Denote $ \alpha=\min\left\lbrace 2(p-1), \frac{m(p-1)^{2}}{m-1}\right\rbrace  $, we obtain
		\begin{equation*}
		\begin{aligned}
		|\mathrm{Hess} v|_{A}^{2}\geq &\alpha \sum_{k=1}^{n}v_{1k}^{2}+\dfrac{1}{m-1}(hw^{1-\frac{p}{2}}+ w)^{2}\\
		&+ \dfrac{2(p-1)v_{11}}{m-1}(hw^{1-\frac{p}{2}}+w)-\dfrac{(f_{1}v_{1})^{2}}{m-n}.
		\end{aligned}
		\end{equation*}
	 Observe that
		\begin{equation*}
		2wv_{11}=\left\langle\nabla v, \nabla w \right\rangle, \sum_{j=1}^{n}v_{1j}^{2}=\dfrac{1}{4}\dfrac{|\nabla w|^{2}}{w}. 
		\end{equation*}
		Substituting these identities into the above inequality, we have
		\begin{equation*}
		\begin{aligned}
		|\mathrm{Hess} v|_{A}^{2}  \geq & \dfrac{\alpha}{4}\dfrac{|\nabla w|^{2}}{w} + \dfrac{w^{2}}{m-1}(1+hw^{\frac{-p}{2}})^{2} \\
		&+ \dfrac{p-1}{m-1}(1+hw^{\frac{-p}{2}})\left\langle \nabla v, \nabla w \right\rangle -  \dfrac{(f_{1}v_{1})^{2}}{m-n}.
		\end{aligned}
		\end{equation*}
		The proof is complete.	
	\end{proof}
	
	Now we will estimate $ \mathcal{L}_{f}(Q) $, for $ Q=|\nabla v|^{p} $. From  \eqref{eq2}, we obtain
$$\nabla\Delta_{p,f}v=-h'(v)\nabla v-\nabla (|\nabla v|^p).$$
		Combining this identity with Lemma \ref{bd3},  we have
	\begin{equation*}
	\begin{aligned}
	\mathcal{L}_{f}(Q)=pw^{p-2}\left( \left| \mathrm{Hess} v \right| _{A}^{2}+\mathrm{Ric}_{f}(\nabla v, \nabla v)\right)-pw^{\frac{p-2}{2}}\left\langle \nabla v, \nabla Q \right\rangle-ph'(v)w^{\frac{p}{2}-1}|\nabla v|^{2}.
	\end{aligned}
	\end{equation*}
%	We note that in  the given local frame 
%	\begin{equation*}
	%\dfrac{\nabla f \otimes \nabla f}{m-n}(\nabla v, \nabla v)=\dfrac{(f_{1}v_{1})^{2}}{m-n}.
%	\end{equation*}
	Using Lemma \ref{bdhessA} and the above result equation with note that the function $h$ satisfying $h'(v)\leq a $, we infer 
	\begin{equation*}
	\begin{aligned}
	\mathcal{L}_{f}(Q)=& \mathcal{L}_{f}(w^{\frac{p}{2}})\\
	\geq & pw^{p-2}\left(  \dfrac{\alpha}{4}\dfrac{|\nabla w|^{2}}{w}+ \dfrac{1}{m-1}w^{2} ( 1+hw^{\frac{-p}{2}})^{2}+\dfrac{p-1}{m-1}(1+hw^{\frac{-p}{2}})\left\langle \nabla v, \nabla w \right\rangle\right)  \\
	&+pw^{p-2}\mathrm{Ric}_{f}^{m}(\nabla v, \nabla v)-pw^{\frac{p-2}{2}}\left\langle \nabla v, \nabla w^{\frac{p}{2}} \right\rangle-paw^\frac{p}{2}.% \\
	%\geq & pw^{p-2}\left(  \dfrac{\alpha}{4}\dfrac{|\nabla w|^{2}}{w}+ \dfrac{1}{m-1}w^{2} ( 1+hw^{\frac{-p}{2}})^{2}+\dfrac{p-1}{m-1}(1+hw^{\frac{-p}{2}})\left\langle \nabla v, \nabla w \right\rangle\right) \\
	%&+pw^{p-2}\mathrm{Ric}_{f}^{m}(\nabla v, \nabla v)-pw^{\frac{p-2}{2}}\left\langle \nabla v, \nabla w^{\frac{p}{2}} \right\rangle\\
	\end{aligned}
	\end{equation*}
This inequality can be written as follows.
	\begin{equation*}
	\begin{aligned}
	\mathcal{L}_{f}(Q)%\geq %& \dfrac{\alpha p}{4}w^{p-3}|\nabla w|^{2} + \dfrac{p}{m-1}w^{p}(1+hw^{\frac{-p}{2}})^{2}\\
	%&+\dfrac{p(p-1)}{m-1}w^{p-2}(1+hw^{\frac{-p}{2}})\left\langle \nabla v, \nabla w \right\rangle-p(m-1)Kw^{p-1}\\
	%&-pw^{\frac{p-2}{2}}\left\langle \nabla v, \nabla w^{\frac{p}{2}} \right\rangle\\
	%\geq & \dfrac{\alpha p}{4}w^{p-3}|\nabla w|^{2} + \dfrac{p}{m-1}w^{p}(1+hw^{\frac{-p}{2}})^{2}\\
	%&+\dfrac{p(p-1)}{m-1}w^{p-2}(1+hw^{\frac{-p}{2}})\left\langle \nabla v, \nabla w \right\rangle -p(m-1)Kw^{p-1}\\
	%&-\dfrac{p^{2}}{2}w^{p-2}\left\langle \nabla v, \nabla w \right\rangle\\
	\geq &  \dfrac{\alpha p}{4}w^{p-3}|\nabla w|^{2} + \dfrac{p}{m-1}w^{p}(1+hw^{\frac{-p}{2}})^{2}\\
	&+\left( \dfrac{p(p-1)}{m-1}(1+hw^{\frac{-p}{2}})-\dfrac{p^{2}}{2}\right) w^{p-2}\left\langle \nabla v, \nabla w \right\rangle+p\mathrm{Ric}_{f}^{m}(\nabla v, \nabla v)w^{p-2}-apw^\frac{p}{2}.
	\end{aligned}
	\end{equation*}
	Note that the above inequality holds when $ w >0$. In order to pass through $\{w=0\}$, we put $\mathcal{S}=\left\lbrace x\in M: w(x)=0\right\rbrace  $. Then we need to have the following lemma. In the rest of this section, integration is taken with respect to $e^{-f}d\mu$. Moreover, we skip $e^{-f}d\mu$ for simplicity of notations.% about integral estimate $ \int_{\Omega} \mathcal{L}_{f}(Q) $ as follows:
	
	\begin{lemma} \label{tichphan} Let $ \psi $ be a non-negative Lipschitz function with  compact support on $ M \setminus \mathcal{S}$, then
		\begin{equation*}
		\begin{aligned}
		&\int_{\Omega} \mathcal{L}_{f}(Q)\psi\\
		&\geq  \int_{\Omega}\Bigg( \dfrac{\alpha p}{4}w^{p-3}|\nabla w|^{2} + \dfrac{p}{m-1}w^{p}(1+hw^{\frac{-p}{2}})^{2}-apw^\frac{p}{2}\\
		&+\left( \dfrac{p(p-1)}{m-1}(1+hw^{\frac{-p}{2}})-\dfrac{p^{2}}{2}\right) w^{p-2}\left\langle \nabla v, \nabla w \right\rangle+p{\rm Ric}_f^m(\nabla v, \nabla v)w^{p-2}\Bigg)\psi.
		\end{aligned}
		\end{equation*}
	\end{lemma}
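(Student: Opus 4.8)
The plan is simply to integrate, against the weight $\psi$, the pointwise differential inequality for $\mathcal{L}_{f}(Q)$ that was derived immediately before the statement. Since the lemma keeps $\mathcal{L}_{f}(Q)$ on the left-hand side, no integration by parts is needed; the only thing to verify is that every term in that pointwise inequality is genuinely defined on the region where the integration actually takes place.

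First I would use the hypothesis that $\mathrm{supp}\,\psi$ is a compact subset of $M\setminus\mathcal{S}$. On this set $w=|\nabla v|^{2}>0$, and by compactness $w$ is bounded below by a positive constant there. Consequently $\nabla u\neq 0$ on $\mathrm{supp}\,\psi$, so the tensor $A$ and the linearization $\mathcal{L}_{f}$ are well defined, $Q=w^{p/2}$ is smooth, and the possibly singular factors $w^{p-3}$, $w^{p-2}$ and $hw^{-p/2}$ appearing in the bound are all continuous and bounded on $\mathrm{supp}\,\psi$. Hence the pointwise inequality
$$
\mathcal{L}_{f}(Q)\geq \dfrac{\alpha p}{4}w^{p-3}|\nabla w|^{2}+\dfrac{p}{m-1}w^{p}(1+hw^{\frac{-p}{2}})^{2}+\Big(\dfrac{p(p-1)}{m-1}(1+hw^{\frac{-p}{2}})-\dfrac{p^{2}}{2}\Big)w^{p-2}\langle\nabla v,\nabla w\rangle+p\,{\rm Ric}_{f}^{m}(\nabla v,\nabla v)w^{p-2}-apw^{\frac{p}{2}}
$$
holds classically at every point of $\mathrm{supp}\,\psi$.

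Next I would multiply this inequality by $\psi$. Because $\psi\geq 0$, the inequality is preserved pointwise on $\mathrm{supp}\,\psi$, while both sides vanish off $\mathrm{supp}\,\psi$ (where $\psi\equiv 0$ in a neighborhood of $\mathcal{S}$, so the product $\mathcal{L}_{f}(Q)\psi$ extends by zero). Integrating over $\Omega$ with respect to $e^{-f}d\mu$ then yields exactly the asserted integral inequality. No boundary contributions appear, since $\psi$ has compact support inside the open set $\{w>0\}$, and the right-hand integrand is continuous and compactly supported, hence integrable.

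I do not expect a genuine obstacle in this particular step: all the content lives in the pointwise estimate already in hand, and the support hypothesis on $\psi$ is tailored precisely to keep the negative powers of $w$ away from the zero set $\mathcal{S}$. The real difficulty is deliberately deferred to the next stage, where one must remove this support restriction and allow $\psi$ to be supported across $\mathcal{S}$; that will require a cutoff/approximation argument near $\mathcal{S}$ together with control on the size of $\mathcal{S}$, after which the Moser iteration driven by the Sobolev inequality \eqref{sobolev1} and the integral Ricci bounds can be carried out to prove Theorem \ref{dl1}.
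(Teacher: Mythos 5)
Your proof is correct, but it is not the argument the paper gives. You integrate the pointwise estimate directly: since $\mathrm{supp}\,\psi$ is a compact subset of $M\setminus\mathcal{S}$, the function $w$ has a positive lower bound there, so $\mathcal{L}_{f}(Q)$, $w^{p-3}$, $hw^{-p/2}$, etc.\ are classically defined, continuous and bounded on the support, and multiplying the pointwise inequality by $\psi\geq 0$ and integrating yields the claim; note that this never uses the Lipschitz regularity of $\psi$ (bounded and measurable would suffice). The paper instead proves the lemma through an integration by parts: it first writes
\begin{equation*}
\int_{\Omega}\mathcal{L}_{f}(Q)\psi=-\int_{\Omega}|\nabla v|^{p-2}\left\langle A(\nabla Q),\nabla\psi\right\rangle,
\qquad
A(\nabla Q)=\frac{p}{2}w^{\frac{p-2}{2}}\nabla w+\frac{p(p-2)}{2}w^{\frac{p-4}{2}}\left\langle\nabla v,\nabla w\right\rangle\nabla v,
\end{equation*}
and only then invokes the pointwise estimate to bound this weak form from below --- this is precisely where the Lipschitz hypothesis on $\psi$ enters. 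The distinction matters downstream: what Lemma \ref{bd2} actually uses, with $\psi=w_{\epsilon}^{b}\eta^{2}$, is the weak formulation, since the pairing of $A(\nabla Q)$ with $\nabla\psi$ is what generates the Caccioppoli-type terms that drive the Moser iteration. So your route buys a more elementary proof of the stated inequality (and under weaker hypotheses on $\psi$), while the paper's route records, inside the same proof, the integration-by-parts identity that the next step needs; on your approach that identity must still be established at the beginning of Lemma \ref{bd2}, which is harmless --- as you correctly flag --- because $w_{\epsilon}^{b}\eta^{2}$ is still compactly supported away from $\mathcal{S}$ before one lets $\epsilon\to 0$.
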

	
	\begin{proof} 
We first use integration by parts to obtain
		\begin{equation*}
		\begin{aligned}
		\int_{\Omega} \mathcal{L}_{f}(Q)\psi d\mu=&\int_{\Omega}e^{f}\mathrm{div}\left(e^{-f}|\nabla v|^{p-2} A(\nabla Q) \right)\psi e^{-f}dv\\
				=&-\int_{\Omega}|\nabla v|^{p-2}\left\langle A(\nabla Q), \nabla \psi \right\rangle d\mu.
		\end{aligned}
		\end{equation*}
		Since \begin{equation*}
		A=Id+(p-2)\dfrac{\nabla v\otimes \nabla v}{|\nabla v|^{2}}, \qquad \nabla Q=\dfrac{p}{2}w^{\frac{p-2}{2}}\nabla w
		\end{equation*}
		we have
		\begin{equation*}
		\begin{aligned}
		A(\nabla Q)=\dfrac{p}{2}w^{\frac{p-2}{2}}\nabla w+\dfrac{1}{2}p(p-2)w^{\frac{p-4}{2}}\left\langle \nabla v,\nabla w\right\rangle \nabla v.\\
		\end{aligned}
		\end{equation*}
%		On the other hand
%		\begin{equation*}
%		|\nabla v|^{p-2}=w^\frac{{p-2}}{2}.
%		\end{equation*}
		Combining these identities with Lemma \ref{tichphan}, we conclude that
		\begin{equation*}
		\begin{aligned}
		\int_{\Omega} \mathcal{L}_{f}(Q)\psi=-&\int_{\Omega}\left\langle \dfrac{1}{2}w^{p-2}\nabla w + \dfrac{1}{2}(p-2)w^{p-3}\left\langle \nabla v, \nabla w \right\rangle \nabla v, \nabla \psi \right\rangle \\
		\geq &  \int_{\Omega}\Bigg( \dfrac{\alpha p}{4}w^{p-3}|\nabla w|^{2} + \dfrac{p}{m-1}w^{p}(1+hw^{\frac{-p}{2}})^{2}-apw^\frac{p}{2}\\
		&+\left( \dfrac{p(p-1)}{m-1}(1+hw^{\frac{-p}{2}})-\dfrac{p^{2}}{2}\right) w^{p-2}\left\langle \nabla v, \nabla w \right\rangle+p{\rm Ric}_f^m(\nabla v, \nabla v)w^{p-2}\Bigg)\psi.
		\end{aligned}
		\end{equation*}
The proof is complete.
	\end{proof}
	We now assume that $M$ satisfies a weighted Sobolev inequalitiy. This means there exist positive constants $C_1, C_2, C_3$, depending only on $ m $, such that for every ball $ B_{0}(R)\subset M $, every function $ \phi \in C^{\infty}_{0}(B_{0}(R))$ we have
			\begin{equation}\label{sobolev}
			\left( \int_{B_{0}(R)}|\phi|^{\frac{2m}{m-2}}\right) ^{\frac{m-2}{m}}\leq C_1e^{C_2(1+\sqrt{K}R)}V^{-C_3}\int_{B_{0}(R)}(R^{2}|\nabla \phi|^{2}+\phi^{2})d\mu,
			\end{equation}
			where $ V $ is geodesic ball volume $ B_{0}(R) $.
	Using this Sobolev inequality and Lemma \ref{tichphan}, we can prove the following result which is an important ingredient in the proof of Theorem \ref{dl1}.	
\begin{lemma}[$ \mathbf{L^{q}}$-norm estimate]\label{bd2}  With the same assumption as in Theorem \ref{dl1}, if $ b_{0}>0 $ large enough, then there exists $ d_1(p,m)>0 $ such that
		\begin{equation*}
		\parallel w \parallel_{L^{(b_{0}+p-1)\frac{m}{m-2}}(B_{0}(\frac{3}{4}R))}
		\leq d_{1}\dfrac{b_{0}^2}{R^{2}}V^{\frac{m-2}{m(b_{0}+p-1)}}.
		\end{equation*}
	\end{lemma}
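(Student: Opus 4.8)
The plan is to carry out one Nash--Moser iteration step: substitute the pointwise inequality for $\mathcal{L}_f(Q)$ established just before Lemma~\ref{tichphan} into the integral inequality of Lemma~\ref{tichphan} with a judicious test function, extract a Caccioppoli-type bound, and close the estimate using the weighted Sobolev inequality \eqref{sobolev}. Write $\theta=b_0+p-1$ and fix a cutoff $\varphi$ with $\varphi\equiv 1$ on $B_0(\tfrac34 R)$, $\varphi\equiv 0$ off $B_0(R)$ and $|\nabla\varphi|\le C/R$. Since $w=|\nabla v|^2$ scales like $R^{-2}$, the target $\|w\|_{L^{s}}\le d_1 b_0^2 R^{-2} V^{1/s}$, with $s=\theta\tfrac{m}{m-2}$, is exactly a volume-normalised version of the pointwise bound $w\lesssim b_0^2/R^2$; I therefore expect the scale $R^{-2}$ to come from $\nabla\varphi$ and the volume $V$ to come from the coercive zeroth-order term, while the factor $V^{-C_3}$ in \eqref{sobolev} supplies the correct power of $V$.

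First I would set $\psi=w^{b_0}\varphi^2$ in Lemma~\ref{tichphan}. As that lemma requires support in $M\setminus\mathcal{S}$, I would work with $w$ truncated away from $\{w\le\varepsilon\}$ and let $\varepsilon\to 0$; the factor $w^{b_0}$ with $b_0>0$ guarantees the contributions near $\mathcal{S}$ vanish. The left-hand side, integrated by parts as in the proof of Lemma~\ref{tichphan}, yields a dominant term proportional to $-b_0\int w^{\theta-2}|\nabla w|^2\varphi^2$ plus cutoff errors carrying $\nabla\varphi$. Matching this against the right-hand side and using $F\ge 0$, hence $h\ge 0$ and $(1+hw^{-p/2})^2\ge 1$, I retain the good gradient term $\int w^{\theta-2}|\nabla w|^2\varphi^2$ and, crucially, the coercive term $\tfrac{p}{m-1}\int w^{b_0+p}\varphi^2$. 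The cross term is dispatched by $|\langle\nabla v,\nabla w\rangle|\le w^{1/2}|\nabla w|$ and Young's inequality, splitting it between the gradient and coercive terms; the same device controls the cutoff errors, leaving a remainder of size $\lesssim R^{-2}\int_{B_0(R)}w^{\theta}$, into which I also absorb the lower-order term $ap\int w^{b_0+p/2}\varphi^2$ (whose exponent is at most $\theta$ since $p\ge 2$).

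The delicate step is the curvature term $p\int {\rm Ric}_f^m(\nabla v,\nabla v)\,w^{p-2}\psi$. Using ${\rm Ric}_f^m(\nabla v,\nabla v)\ge\big((n-1)K-({\rm Ric}_f^m)_-^K\big)w$, the $(n-1)K$ part has the right sign and only the negative contribution $p\int ({\rm Ric}_f^m)_-^K w^{\theta}\varphi^2$ is problematic. I would bound it by H\"older with exponent $q$, producing the factor $\|{\rm Ric}_-^K\|_{q,r}$ times $\|w^{\theta/2}\varphi\|_{L^{2q/(q-1)}}^2$; as $q>m/2$ the exponent $2q/(q-1)$ lies strictly below the Sobolev exponent $\tfrac{2m}{m-2}$, so interpolation turns this into a small multiple of the Sobolev energy plus a lower-order piece. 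This is where the hypotheses $\|{\rm Ric}_-^K\|_{q,r}\le\tfrac{1}{bR^2}$ and $k(q,1)\le\tfrac1b$ enter: taking $b$ large forces the coefficient below the threshold needed to absorb the term into the Sobolev left-hand side. I expect this absorption to be the main obstacle, since it must hold uniformly in $b_0$ and be reconciled with the weight $V^{-C_3}$ in \eqref{sobolev}.

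Finally I would assemble everything. Applying \eqref{sobolev} to $G=w^{\theta/2}\varphi$ bounds $\big(\int_{B_0(3R/4)}w^{s}\big)^{(m-2)/m}$ by a constant times $V^{-C_3}\int(R^2|\nabla G|^2+G^2)$, and the estimate $|\nabla G|^2\lesssim\theta^2 w^{\theta-2}|\nabla w|^2\varphi^2+w^{\theta}|\nabla\varphi|^2$ lets the Caccioppoli bound reduce the gradient part to a polynomial-in-$b_0$ multiple of the remainders collected above. The decisive simplification is that the coercive power exceeds $\theta$ by one: Young's inequality gives $w^{\theta}=w^{b_0+p-1}\le\varepsilon\,w^{b_0+p}+C_\varepsilon$, so after absorbing all $w^{b_0+p}$ pieces into the coercive term the surviving right-hand side is $C\int_{B_0(R)}\varphi^2\le CV$. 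Tracking the scales as above, this yields $\int_{B_0(3R/4)}w^{s}\le (d_1 b_0^2/R^2)^{s}V$, and taking $s$-th roots gives the claim, with the $m$-dependent Sobolev constants folded into $d_1$.
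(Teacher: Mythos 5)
Your proposal follows essentially the same route as the paper's own proof: the same test function $w^{b}\eta^{2}$ plugged into Lemma~\ref{tichphan} to get a Caccioppoli inequality with the coercive term $\int w^{b_0+p}\eta^2$, the same H\"older-plus-interpolation treatment of the integral Ricci term (the technique of \cite{Daietal}, absorbed into the Sobolev energy using the smallness hypotheses on $\|{\rm Ric}_-^K\|_{q,r}$ and $k(q,1)$), and the same final application of \eqref{sobolev} with all remainders absorbed into the coercive term to produce the factor $\left(b_0^2/R^2\right)^{b_0+p-1}V$. The only cosmetic difference is in producing that volume factor: where you use Young's inequality $w^{b_0+p-1}\le\varepsilon w^{b_0+p}+C_\varepsilon$, the paper instead takes $\eta=\eta_1^{p+b}$ and splits $B_0(R)$ into $\{w> a_5 b_0^2R^{-2}\}$ and its complement --- an equivalent device.
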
 
\begin{proof}			
We choose  $\psi= w_{\epsilon}^{b}\eta^{2} $, where $ \epsilon > 0 $, $ \eta \in \mathcal{C}_{0}^{\infty}(B_{0}(R)) $ and $ w_{\epsilon}=(w-\epsilon)^{+}$.  Plugging $\psi$ into Lemma \ref{tichphan}, we obtain an inequality which is the same as the equation (2.3) in \cite{zhaoyang}. Therefore, we can use the same arguments as in \cite{zhaoyang}, after letting $\epsilon$ tend to zero and doing some direct computations, we obtain (see the conclusion before Lemma 2.2 in \cite{zhaoyang})
\begin{align}
\int_{B_o(R)}&\left|\nabla\left(w^{\frac{p+b-1}{2}}\eta\right)\right|^2+bd_1\int_{B_o(R)}w^{p+b}\eta^2\notag\\
\leq& a_0\int_{B_o(R)}w^{p+b-1}|\nabla\eta|^2-bd_2\int_{B_o(R)}{\rm Ric}_f^m(\nabla v, \nabla v)w^{p+b-2}\eta^2\notag\\
&+bad_3\int_{B_o(R)}w^{\frac{p}{2}+b}\eta^2.\label{ric1}
\end{align}
for some positive constants $a_0, d_1, d_2, d_3\in\mathbb{R}^+$ and $b\cong\frac{1}{m-1}$. From now on, $a_0, a_{1}, a_{2},... $ and $ d_{1},d_{2},... $ are coefficients depending only on $ p $ and $ m $. We now estimate the Ricci term. By H\"{o}lder inequality, we have 
\begin{align}
\int_{B_o(R)}&{\rm Ric}_f^m(\nabla v, \nabla v)w^{p+b-2}\eta^2\notag\\
\geq&(n-1)K\int_{B_o(R)}w^{p+b-1}\eta^2-\int_{B_o(R)}|({\rm Ric}_{f}^{m})_-^K|w^{p+b-1}\eta^2\notag\\
\geq&(n-1)K\int_{B_o(R)}w^{p+b-1}\eta^2-\|({\rm Ric}_{f}^{m})_-^K\|^q\left(\int_{B_o(R)}(w^{p+b-1}\eta^2)^{q/(q-1)}\right)^\frac{q-1}{q}.\label{ric2}
\end{align}
Now, we use a technique in \cite{Daietal} to process as follows. We put $\alpha=\frac{2q-n}{2(q-1)}$ and $\theta=\frac{m}{m-2}$ then 
$$\alpha+(1-\alpha)\theta=\frac{q}{q-1}.$$
Using H\"{o}lder inequality, for any $\varepsilon>0$, we have 
$$\begin{aligned}
\left(\int_{B_o(R)}(w^{p+b-1}\eta^2)^{q/(q-1)}\right)^\frac{q-1}{q}\leq&\left(\int_{B_o(R)}w^{p+b-1}\eta^2\right)^{\frac{q-1}{q}\alpha}\cdot\left(\int_{B_o(R)}(w^{p+b-1}\eta^2)^\theta\right)^{(1-\alpha)\frac{q-1}{q}}\\
\leq&\varepsilon\left(\int_{B_o(R)}(w^{p+b-1}\eta^2)^\theta\right)^{\frac{1}{\theta}}+\varepsilon^{-\frac{(1-\alpha)\theta}{\alpha}}\cdot\left(\int_{B_o(R)}(w^{p+b-1}\eta^2)\right),
\end{aligned}$$
where in the last inequality, we used Young's inequality
$$xy\leq \varepsilon x^\gamma+\varepsilon^{-\frac{\gamma^*}{\gamma}}y^{\gamma^*},\ \forall x, y\geq0, \gamma>1, \frac{1}{\gamma}+\frac{1}{\gamma^*}=1.$$
By \eqref{sobolev}, this implies
\begin{align}
\left(\int_{B_o(R)}(w^{p+b-1}\eta^2)^{q/(q-1)}\right)^\frac{q-1}{q}
\leq&\varepsilon C_1e^{C_2(1+\ \sqrt[]{K}R)}V^{-C_3}\int_{B_o(R)}\left(R^2\left|\nabla\left(w^{\frac{p+b-1}{2}}\eta\right)\right|^2+w^{p+b-1}\eta^2\right)\notag\\
&+\varepsilon^{-\frac{(1-\alpha)\theta}{\alpha}}\cdot\left(\int_{B_o(R)}(w^{p+b-1}\eta^2)\right).\label{ric3}
\end{align}
Combining \eqref{ric1}-\eqref{ric3} and choose $\varepsilon=\frac{1}{2bd_1(C_1e^{C_2(1+\ \sqrt[]{K}R)}V^{-C_3}R^2)\|({\rm Ric}_f^m)_-^K\|}$, we conclude that
\begin{align}
\int_{B_o(R)}&\left|\nabla\left(w^{\frac{p+b-1}{2}}\eta\right)\right|^2+bd_1\int_{B_o(R)}w^{p+b}\eta^2\notag\\
\leq& a_0\int_{B_o(R)}w^{p+b-1}|\nabla\eta|^2-(n-1)bd_2K\int_{B_o(R)}w^{p+b-1}\eta^2\notag\\
&+bad_3\int_{B_o(R)}w^{\frac{p}{2}+b}\eta^2+d_4(be^{C_2(1+\sqrt[]{K}R)}V^{-C_3}R^2\|({\rm Ric}_m^f)_-^K\|)^\frac{n}{2q-n}\int_{B_o(R)}w^{p+b-1}\eta^2.\notag
\end{align}
Since
\begin{equation}\label{condition}
a=\begin{cases}
0,\quad&\text{if }p\not=2\\
\geq0&\text{if }p=2
\end{cases}, \quad \|{\rm Ric}_-^K\|_{q,r}\leq \frac{c}{be^{C_2(1+\sqrt[]{K}R)}V^{-C_3}R^2}, 
\end{equation}
and $\frac{p}{2}+b=p+b-1$ when $p=2$, the above inequality implies 
\begin{align}
\int_{B_o(R)}&\left|\nabla\left(w^{\frac{p+b-1}{2}}\eta\right)\right|^2+bd_1\int_{B_o(R)}w^{p+b}\eta^2\notag\\
\leq& a_1\int_{B_o(R)}w^{p+b-1}|\nabla\eta|^2+Kbd_3\int_{B_o(R)}w^{p+b-1}\eta^2
\end{align}
Combining this inequality with Sobolev inequality \eqref{sobolev}, we obtain
%%%%%%
%%%%%%%
			\begin{equation} \label{eq12}
			\begin{aligned}
			&\left( \int_{B_{0}(R)}(w^{\frac{p+b-1}{2}}\eta)^{\frac{2m}{m-2}}\right) ^{\frac{m-2}{m}} + bd_{1}R^{2}e^{c_{2}b_{0}}V^{-\frac{2}{m}}\int_{B_{0}(R)}w^{p+b}\eta^{2}\\
			 \leq& d_{2}R^{2}e^{c_{2}b_{0}}V^{-\frac{2}{m}}\int_{B_{0}(R)} w^{p+b-1}|\nabla \eta|^{2}\\
			&+Kbd_{3}R^{2}e^{c_{2}b_{0}}V^{-\frac{2}{m}}\int_{B_{0}(R)} p(m-1)w^{p+b-1}\eta^{2}+e^{c_{2}b_{0}}V^{-\frac{2}{m}}\int_{B_{0}(R)}w^{p+b-1}\eta^{2}\\
			\leq& d_{2}R^{2}e^{c_{2}b_{0}}V^{-\frac{2}{m}}\int_{B_{0}(R)} w^{p+b-1}|\nabla \eta|^{2}+a_1b_{0}b^{2}e^{c_{2}b_{0}}V^{-\frac{2}{m}}\int_{B_{0}(R)}w^{p+b-1}\eta^{2}.
			\end{aligned}
			\end{equation}
			where $ b_{0}=c_{1}(m,p)(1+\sqrt{K}R) $ with $ c_{1} $ large enough.	Choose $ \eta_{1}\in C^{\infty}_{0}(\Omega) $ satisfying $ 0\leq\eta_{1}\leq1$, $ \eta_{1}\equiv 1 $ on $ B_{0}(\frac{3}{4}R) $, $ |\nabla \eta_{1}| \leq \dfrac{C_{1}}{R} $ and put $ \eta=\eta_{1}^{p+b} $. Then
			\begin{align*}
		d_{2}R^{2}\int_{B_{0}(R)}w^{p+b-1}|\nabla \eta|^{2}
			&\leq a_2b^{2}\int_{B_{0}(R)}w^{p+b-1}\eta^\frac{{2(p+b-1)}}{p+b}\\
			&\leq a_2b^{2}\left( \int_{B_{0}(R)}w^{p+b-1}\eta^2\right) ^{\frac{{p+b-1}}{p+b}}V^{\frac{1}{p+b}}\\
			&\leq \dfrac{bd_{1}}{2}R^{2}\int_{B_{0}(R)}w^{p+b-1}\eta^2+\left( \dfrac{a_3}{R^{2}}\right)^{p+b-1} b^{p+b+1}V,
			\end{align*}
			where we used the H\"{o}lder inequality and the Young inequality in the last two inequalities. Let $b=b_0$, this implies
						%\begin{equation*}
			\begin{align}
			d_{2}R^{2}e^{c_{2}b_{0}}V^{\frac{-2}{m}}\int_{B_{0}(R)}w^{p+b-1}|\nabla \eta|^{2}\leq& \dfrac{bd_{1}}{2}R^{2}e^{c_{2}b_{0}}V^{\frac{-2}{m}}\int_{B_{0}(R)}w^{p+b}\eta^{2}\notag\\
			&+\left( \dfrac{a_3}{R^{2}}\right)^{p+b-1}b^{p+b+1}e^{c_{2}b_{0}}V^{1-\frac{2}{m}}\label{note1}\\
			\leq& \dfrac{bd_{1}}{2}R^{2}e^{c_{2}b_{0}}V^{-\frac{2}{m}}\int_{B_{0}(R)}w^{p+b}\eta^{2}\notag\\
			&+\left(\dfrac{a_4b_0^2}{R^{2}}\right)^{p+b_{0}-1}V^{1-\frac{2}{m}}\label{eq13}
			\end{align} 
			%\end{equation*}
			We estimate the second term of the right hand side of \eqref{eq12}. We see that $ a_1b^{2}_{0}bw^{p+b-1}<\frac{1}{2}bd_{1}R^{2}w^{p+b} $ when $ w>a_5b^{2}_{0}R^{-2} $. Therefore, to estimate the term, we devide $ B_{0}(R) $ into two domains $ B_{1} $ and $ B_{2} $ such that
			\begin{equation*}
			w\mid _{B_{1}}>a_5b^{2}_{0}R^{-2}; \quad 	w\mid _{B_{2}}\leq a_5b^{2}_{0}R^{-2}.
			\end{equation*}
			Since $ 0\leq \eta \leq 1 $, we have
			\begin{equation*}
			\begin{aligned}
			&a_1b^{2}_{0}be^{c_{2}b_{0}}V^{-\frac{2}{m}}\int_{B_{0}(R)}w^{p+b-1}\eta^{2}\\
			&=a_1b^{2}_{0}be^{c_{2}b_{0}}V^{-\frac{2}{m}}\left( \int_{B_{1}}w^{p+b-1}\eta^{2}+\int_{B_{2}}w^{p+b-1}\eta^{2}\right)\\
			&\leq \dfrac{1}{2}bd_{1}R^{2}e^{c_{2}b_{0}}V^{-\frac{2}{m}} \int_{B_{1}}w^{p+b}\eta^{2}+ a_1b_{0}^{2}be^{c_{2}b_{0}}V^{-\frac{2}{m}}\int_{B_{2}}w^{p+b-1} \\
			&\leq \dfrac{1}{2}bd_{1}R^{2}e^{c_{2}b_{0}}V^{-\frac{2}{m}} \int_{B_{1}}w^{p+b}\eta^{2}+ a_1b_{0}^{2}be^{c_{2}b_{0}}V^{-\frac{2}{m}}\int_{B_{2}}\left( \dfrac{a_5b_{0}^{2}}{R^{2}}\right)^{p+b-1} \\
			&\leq \dfrac{1}{2}bd_{1}R^{2}e^{c_{2}b_{0}}V^{-\frac{2}{m}} \int_{B_{0}(R)}w^{p+b}\eta^{2}+ a_1b_{0}^{2}be^{c_{2}b_{0}}V^{-\frac{2}{m}}\int_{B_{0}(R)}\left( \dfrac{a_5b_{0}^{2}}{R^{2}}\right)^{p+b-1} \\
			\end{aligned}
			\end{equation*}
			Observe that %for $ c_{1} $ large enough, there exist $ a_6 $, $ a_7 $,   $ a_8 $ such that
			\begin{equation*}
			\begin{aligned}
			a_1b_{0}^{2}be^{c_{2}b_{0}}V^{-\frac{2}{m}}\int_{B_{0}(R)}\left( \dfrac{a_5b_{0}^{2}}{R^{2}}\right)^{p+b-1}\leq &
			a_1b_{0}^{3}e^{c_{2}b_{0}}\left( \dfrac{a_5b_{0}^{2}}{R^{2}}\right)^{p+b_{0}-1}V^{1-\frac{2}{m}}
			\\
			\leq & \left( \dfrac{a_6b_{0}^{2}}{R^{2}}\right)^{p+b_{0}-1}V^{1-\frac{2}{m}} .\\
			\end{aligned}
			\end{equation*}
Combining this observation and the previous inequality, we infer 
						\begin{equation} \label{eq14}
			\begin{aligned}
			&a_1b^{2}_{0}be^{c_{2}b_{0}}V^{-\frac{2}{m}}\int_{B_{0}(R)}w^{p+b-1}\eta^{2}\\
			&\leq \dfrac{1}{2}bd_{1}R^{2}e^{c_{2}b_{0}}V^{-\frac{2}{m}} \int_{B_{0}(R)}w^{p+b}\eta^{2}+\left( \dfrac{a_6b_{0}^{2}}{R^{2}}\right)^{p+b_{0}-1}V^{1-\frac{2}{m}} .\\
			\end{aligned}
			\end{equation}
			Substituting \eqref{eq13}, \eqref{eq14} into \eqref{eq12}, we obtain
			\begin{equation*}
			\left( \int_{B_{0}(R)}(w^{\frac{p+b-1}{2}}\eta)^{\frac{2m}{m-2}}\right) ^{\frac{m-2}{m}} \leq \left(\dfrac{a_7}{R^{2}}b_{0}^{2}\right)^{p+b_{0}-1}V^{1-\frac{2}{m}}.
			\end{equation*}
As a consequence, this implies
			\begin{equation*}
			\parallel w \parallel_{L^{(b_{0}+p-1)\frac{m}{m-2}}(B_{0}(\frac{3}{4}R))}
			\leq d_{4}\dfrac{b_{0}^2}{R^{2}}V^{\frac{m-2}{m(b_{0}+p-1)}}.
			\end{equation*}
			We are done.
		\end{proof}
				Next we will prove  Theorem \ref{dl1}.
		
		\begin{proof}
Observe that $\lim\limits_{b\to\infty}\|w\|_{L^b(B_o(3R/4))}=\|w\|_{L^\infty(B_o(3R/4))}$, for any $\eta>0$, there exists $\overline{b}>0$, such that for any $b\geq \overline{b}$, we have 
		$$\|w\|_{L^\infty(B_o(3R/4))}\leq \|w\|_{L^b(B_o(3R/4))}+\eta.$$
	Let $b=b_0$ and choose $b_0\geq \overline{b}$ such that \eqref{condition} holds true. Then the first conculusion follows by Lemma \ref{bd2}.
	
We now assume that $\|{\rm Ric}_-^K\|_{q,r}=0$, this means that \eqref{condition}  holds true for any $b$ large enough. Hence 
the inequality \eqref{eq12} holds true for arbitrary $b$ large enough. Thus, the last conclusion can be verified by following a standard Moser's iteration (see \cite{DD, WZ, zhaoyang}). For the completeness, we include some details here. Note that in the proof of Lemma \ref{bd2}, we have shown the inequality \eqref{eq12}. Since the second term in the left side hand of \eqref{eq12} is non-negative, we obtain
			
			\begin{equation*}
			\left( \int_{B_{0}(R)}(w^{\frac{p+b-1}{2}}\eta)^{\frac{2m}{m-2}}\right)^{\frac{m-2}{m}}\leq a_8e^{c_{2}b_{0}}V^{-\frac{2}{m}}\int_{B_{0}(R)}(bR^{2}|\nabla \eta|^{2}+b_{0}^{2}b^{2}\eta^{2}) w^{p+b-1}.
			\end{equation*}
			To use the Moser's iteration, we put
			\begin{equation*}
			b_{\ell+1}=b_{\ell}\dfrac{m}{m-2}, \quad b_{1}=(b_0+p-1)\frac{m}{m-2},\quad \Omega_{\ell}=B_{0}(\dfrac{R}{2}+\dfrac{R}{4^\ell}), \quad \ell=1,2...
			\end{equation*}
			and choose $ \eta_{\ell}\in C_{0}^{\infty}(R) $ such that
			\begin{equation*}
			\eta_{\ell} \equiv 1 \text{ on } \Omega_{\ell+1}, \quad \eta_{l}\equiv \text{ on } B_0(R) \setminus \Omega_{\ell}, \quad |\nabla \eta_{\ell}|\leq\dfrac{C4^{\ell}}{R},\quad 0\leq\eta_{\ell}\leq 1.
			\end{equation*}
			With the above choosing and note that $b=b_0$, we have 
			\begin{equation*}
			\left( \int_{\Omega_{\ell+1}}w^{b_{\ell+1}}\right)^{\frac{1}{b_{\ell+1}}}\leq \left( a_8e^{c_{2}b_{0}}V^{-\frac{2}{m}}\right) ^{\frac{1}{b_{\ell}}}\left( \int_{\Omega_{\ell}} (b_{0}^{2}b^{2}+bR^{2}|\nabla \eta|^{2}) w^{b_{\ell}}\right) ^{\frac{1}{b_{\ell}}}.
			\end{equation*}
			A standard argument implies
			\begin{equation*}
			\parallel w \parallel_{L^{\infty}(B_{0}(\frac{R}{2}))} \leq \left( a_8e^{c_{2}b_{0}}V^{-\frac{2}{m}}\right) ^{\frac{m}{2b_{1}}} 17^{\frac{m^{2}}{4b_{1}}}(b_{0}b)^{\frac{m}{b_{1}}} \parallel w \parallel_{L^{b_{1}}(B_{0}(\frac{3R}{4}))}.
			\end{equation*}
This togather with Lemma \ref{bd2} infers 
			\begin{equation*}
			\parallel w \parallel_{L^{\infty}(B_{0}(\frac{R}{2}))}
			\leq a_9\left( \dfrac{b_{0}}{R}\right)^{2}.
			\end{equation*}
			Since $ b_{0}=c_{1}(1+\sqrt{K}R) $, we have
			\begin{equation*}
			\parallel w \parallel_{L^{\infty}(B_{0}(\frac{R}{2}))}
			\leq a_{10}\left( \dfrac{1+\sqrt{K}R}{R}\right)^{2}.
			\end{equation*}
			Since $ w=\left( \dfrac{|\nabla u|}{u}(p-1)\right) ^{2} $, we are done
		\end{proof}
		\begin{remark}If $k(q,1)\not=0$ then the condition \eqref{condition} can not satisfy for $b$ large enough. Hence, the Moser iteration can not be applied in this case. This explains why we need to add the constant $\eta>0$ in the right hand side of \eqref{kq1}.
		\end{remark} 
\section{Liouville theorems and local gradient estimates}
	In this	 section, we will point out applications of Theorem \ref{dl1} to derive some Liouville results and local gradient estimates on Riemannian manifold. Recall that $h(v)=(p-1)^{p-1}e^{-v}F(e^{\frac{v}{p-1}})$. Hence
	$$h'(v)=(p-1)^{p-1}e^{-v}\left[\frac{F'(e^{v/(p-1)})e^{v/(p-1)}}{p-1}-F(e^{v/(p-1)})\right]$$
	 First, we give a proof of Corollary \ref{hq1}.

\noindent
\begin{proof}[Proof of Corollary \ref{hq1}]
For $ F(u)=u(1-u^{2}) $ then $ F'(u)=1-3u^{2} $. It is easy to see that for $0< u\leq 1, p\geq2$ then $v=\log u\leq0$, consequently $0<e^{v/(p-1)}\leq 1$. Moreover, if $0<u\leq1$ then
		$$\begin{aligned}
		\dfrac{F'(u)u}{p-1}-F(u)&=\dfrac{(1-3u^{2})u}{p-1}- u(1-u^{2})\\
		&=\frac{u}{p-1}((p-4)u^2-(p-2))\leq0.
		\end{aligned}$$	
		Hence, $h'(v)\leq 0$ assumpsion of Theorem \ref{dl1} holds. So we have  \eqref{main}. When $ K=0 $, this implies
		\begin{equation*}
		\dfrac{|\nabla u|}{u}\leq \dfrac{C_{p,m}}{R}.
		\end{equation*}
		Let $ R \rightarrow +\infty $, since $ u>0 $ then we have $ \nabla u=0 $, therefore  $ u $ is constant on $ M $. This leads to $ \Delta_{p,f}u=0 $, as a consequence, we have $ u(1-u^{2})=0 $. Using condition $ 0<u\leq 1 $, we conclude $ u=1 $ on $ M $. The proof is complete.
\end{proof}				
\begin{proof}[Proof of Corollary \ref{hq2}] By assumption we have $F(u)=cu(1-u)=cu-cu^2$. Therefore, for $0< u\leq 1, p\geq2$ then
		$$\begin{aligned}
		\dfrac{F'(u)u}{p-1}-F(u)&=\dfrac{(c-2cu)u}{p-1}- cu-cu^{2}\\
		&=\frac{cu}{p-1}((p-3)u-(p-2))\leq0.
		\end{aligned}$$	
The proof follows directly from Therem \ref{dl1}.
\end{proof}
\begin{proof}[Proof of Corollary \ref{hq31}]We have $F(u)=au\log u$. Hence for $p=2, v=\log u\geq0$, we have $h(v)=av\geq0$ and $h'(v)=a\geq0$. The proof follows directly from Therem \ref{dl1}.
\end{proof}
		\begin{remark}% \label{hq3}
Using the same argument as in proof of Corollaries \ref{hq1}-\ref{hq31}, we can obtain gradient estimates for the Lichnerowicz type equation $
		\Delta_{p,f}u+u^{a}-u^{b}=0,
		$
on a smooth metric space with $ Ric_{f}^{m}\geq-(m-1)K $, $ K $ is a non-negative constant. We leave the details of computations for the reader.
\end{remark}
Finally, we introduce a local gradient estimate for a nonlinear equation under integral Ricci curvature condition.
\begin{corollary} \label{hq311}
		Let $ (M,g) $ be complete Riemannian manifold. Suppose that $ u\geq1 $ is a  positve solution of equation
		\begin{equation}\label{soliton}
		\Delta_{f}u+au\log u=0, \quad a\geq0,
		\end{equation}
		on the geodesic ball $ B_{0}(R)\subset M $. For $q>n/2$ and $R\leq1$, then for any $\eta>0$ there exists $b$ large enough such that if $k(q,1)\leq \frac{1}{b}$ and $\|{\rm Ric}_{-}^K\|_{q,r}\leq\frac{1}{bR^2}$ then 
		\begin{equation*} 
		\dfrac{|\nabla u|}{u}\leq C_{p,m, V}\dfrac{1+\sqrt{K}R}{R} + \eta
		\end{equation*}
		on the geodesic ball  $ B_{0}(\frac{R}{2}) $, with $ C_{p,m, V} $ only depends on $ p, m $ and $ V=V(B_o(R)) $.
	\end{corollary}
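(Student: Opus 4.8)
The equation \eqref{soliton} is precisely the case $p=2$ of \eqref{eq1} with $F(u)=au\log u$, so the plan is to verify the structural hypotheses of Theorem \ref{dl1} and then apply its first (integral) conclusion \eqref{kq1}. First I would check the conditions on $F$ and $h$ exactly as in the proof of Corollary \ref{hq31}: since $u\geq 1$ we have $v=(p-1)\log u=\log u\geq 0$ and $F(u)=au\log u\geq 0$ on the range of the solution, while
\begin{equation*}
h(v)=(p-1)^{p-1}e^{-v}F(e^{v/(p-1)})=e^{-v}\cdot a e^{v}\cdot v=av,
\end{equation*}
so that $h'(v)=a\geq 0$. As $p=2$, the value $a=a(2)$ is allowed to be any non-negative constant, hence the requirement $h'(v)\leq a(p)$ holds and the smallness condition \eqref{condition} reduces to a bound on $\|\RicH\|_{q,r}$ alone.

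The genuinely new ingredient, and the step that separates this corollary from Corollaries \ref{hq1}--\ref{hq31}, is that we no longer assume a pointwise Bakry--\'Emery lower bound, so Lemma \ref{sobolev22} is unavailable to supply the Sobolev inequality \eqref{sobolev1}. Instead I would derive \eqref{sobolev1} directly from the smallness of the integral curvature. This is exactly where the hypotheses $q>n/2$, $R\leq 1$ and $k(q,1)\leq 1/b$ are used: for $q>n/2$ and $k(q,1)$ sufficiently small, the standard theory of integral Ricci bounds (volume doubling together with a Neumann--Poincar\'e inequality on balls of radius at most $1$, in the spirit of \cite{Gal, PW1, PW2, Daietal}) yields a local $L^2$-Sobolev inequality on $B_0(R)$ of the form \eqref{sobolev1}, with $C_1,C_2,C_3$ depending only on $m$ and the volume $V=V(B_0(R))$ entering through the normalization. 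Taking $b$ large forces $k(q,1)\leq 1/b$ into the smallness regime required for this functional inequality to hold; the restriction $R\leq 1$ keeps the local geometry controlled so that the averaged curvature bound on $B(x,r)$ transfers to a genuine Sobolev estimate.

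With \eqref{sobolev1} established, I would invoke Theorem \ref{dl1}. Given $\eta>0$, the theorem supplies a threshold $b$ such that whenever $\|\RicH\|_{q,r}\leq 1/(bR^2)$ and $k(q,1)\leq 1/b$ the bound \eqref{kq1} holds on $B_0(R/2)$; enlarging $b$ if necessary lets both smallness conditions be met simultaneously and also absorbs the constant factor in \eqref{condition}. Since $p=2$ and $a\geq 0$ is admissible, \eqref{kq1} reads precisely
\begin{equation*}
\frac{|\nabla u|}{u}\leq C_{p,m,V}\frac{1+\sqrt{K}R}{R}+\eta
\end{equation*}
on $B_0(R/2)$, which is the assertion; the dependence of the constant on $V$ is inherited from the factor $V^{-C_3}$ in the Sobolev inequality, explaining why one writes $C_{p,m,V}$ rather than $C_{p,m}$.

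The main obstacle is the second paragraph: producing the weighted Sobolev inequality \eqref{sobolev1} from integral curvature smallness alone, and in particular tracking how the Sobolev constant degenerates as $k(q,1)$ approaches $1/b$ so that the exponential factor $e^{C_2(1+\sqrt{K}R)}$ and the power $V^{-C_3}$ can be taken with $C_i$ depending only on $m$. Once this inequality is in hand, the remainder of the argument is a direct citation of Theorem \ref{dl1}.
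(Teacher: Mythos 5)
Your proposal is correct and follows essentially the same route as the paper: verify that $h(v)=av$, so $h'(v)=a\geq 0$ is admissible since $p=2$, obtain the local Sobolev inequality from the smallness of $k(q,1)$, and then apply Theorem \ref{dl1}. The only difference is that the step you flag as the ``main obstacle'' is, in the paper, a direct citation of Lemma \ref{sobolevintegral} (Corollary 4.6 of \cite{Daietal}), which already yields the Sobolev inequality with constants depending only on the dimension once $k(q,1)\leq\varepsilon$, so no further work on tracking the degeneration of the Sobolev constant is required.
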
 
	When $K=0$, we have $k(q,1)=\|{\rm Ric}_{-}^K\|_{q,r}$, the above corollary can be stated as follows.
	\begin{corollary}% \label{hq311}
		Let $ (M,g) $ be complete Riemannian manifold. Suppose that $ u\geq1 $ is a  positve solution of equation
		\begin{equation}\label{soliton}
		\Delta_{f}u+au\log u=0, \quad a\geq0,
		\end{equation}
		on the geodesic ball $ B_{0}(R)\subset M $. For $q>n/2$ and $R\leq1$, then for any $\eta>0$ there exists $b$ large enough such that if $k(q,1)\leq \frac{1}{b}$ then 
		\begin{equation*} 
		\dfrac{|\nabla u|}{u}\leq C_{p,m, V}\dfrac{1+\sqrt{K}R}{R} + \eta
		\end{equation*}
		on the geodesic ball  $ B_{0}(\frac{R}{2}) $, with $ C_{p,m, V} $ only depends on $ p, m $ and $ V=V(B_o(R)) $.
	\end{corollary}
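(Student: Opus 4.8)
The plan is to derive this statement as the $K=0$ specialization of Corollary \ref{hq311}, so that the only work is to collapse its two curvature hypotheses into the single bound $k(q,1)\leq 1/b$. First I would record the structural setup: the equation is \eqref{eq1} with $p=2$ and $F(u)=au\log u$. Since we assume $u\geq 1$, the substitution $v=\log u$ gives $v\geq 0$, hence $h(v)=av\geq 0$ and $h'(v)=a\geq 0$; thus the hypothesis $h'(v)\leq a=a(p)$ of Theorem \ref{dl1} holds with the admissible value $a(2)\geq 0$, and $F(u)\geq 0$ along the solution. In particular every hypothesis of Corollary \ref{hq311} other than the two curvature smallness conditions is already in force.

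Next I would verify that at $K=0$ the single bound suffices. By definition $({\rm Ric}_f^m)_-^0=\rho_-$, so the scale-invariant quantity and the $L^q$-norm agree, i.e. $k(q,1)=\|{\rm Ric}_-^K\|_{q,r}$, exactly the identity recorded immediately before the statement. Because the corollary assumes $R\leq 1$, we have $1/b\leq 1/(bR^2)$, and therefore
\[
\|{\rm Ric}_-^K\|_{q,r}=k(q,1)\leq \frac{1}{b}\leq \frac{1}{bR^2}.
\]
Hence the hypothesis $k(q,1)\leq 1/b$ forces both $k(q,1)\leq 1/b$ and $\|{\rm Ric}_-^K\|_{q,r}\leq 1/(bR^2)$ for the same $b$, so Corollary \ref{hq311} applies verbatim: for the prescribed $\eta>0$ it produces a $b$ large enough for which $\frac{|\nabla u|}{u}\leq C_{p,m,V}\frac{1+\sqrt{K}R}{R}+\eta$ on $B_0(\frac{R}{2})$, which is the asserted estimate.

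I do not expect a genuine obstacle at this level, as the reduction is purely formal. The substantive analysis is already absorbed into Corollary \ref{hq311}, whose proof must upgrade the integral Ricci bound into the weighted Sobolev inequality \eqref{sobolev1} (via a local Sobolev-constant estimate under $L^q$ curvature control, valid since $q>n/2$) and then invoke Theorem \ref{dl1}. The only points needing care in the present deduction are the identification $k(q,1)=\|{\rm Ric}_-^K\|_{q,r}$ at $K=0$ and the use of $R\leq 1$ to absorb the factor $R^{-2}$; both are elementary.
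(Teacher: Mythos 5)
Your proposal is correct and follows essentially the same route as the paper: the paper treats this corollary as the $K=0$ specialization of Corollary \ref{hq311}, justified only by the remark that $k(q,1)=\|{\rm Ric}_{-}^K\|_{q,r}$ when $K=0$, which is exactly your reduction. Your explicit use of $R\leq 1$ to get $\|{\rm Ric}_{-}^K\|_{q,r}\leq \frac{1}{b}\leq\frac{1}{bR^2}$ is a detail the paper leaves implicit, and it is the right way to make the collapse of the two curvature hypotheses into one rigorous.
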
 
To prove Corollary \ref{hq311}, we need to use the following local Sobolev inequality (see Corollary 4.6 in \cite{Daietal}).
\begin{lemma}[\cite{Daietal}]\label{sobolevintegral}
For any $q > n/2$, there exists $\varepsilon = \varepsilon(p, n) >0 $ such that if $M^n$ has $k(p, 1) \leq  \varepsilon$, then for any $o \in M, r \leq 1$, we have
$$\left( \int_{B_{0}(R)}|\phi|^{\frac{2m}{m-2}}\right) ^{\frac{m-2}{m}}\leq C(n)V^{-2/n}\int_{B_{0}(R)}(R^{2}|\nabla \phi|^{2}+\phi^{2}),$$
where $V=V(B_0(R))$.
\end{lemma}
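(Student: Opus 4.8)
The plan is to deduce the stated Sobolev inequality from two scale-invariant geometric facts, each of which becomes available once the integral curvature quantity $k(q,1)$ is taken small enough: a \emph{local volume doubling} property and a \emph{local $L^2$ Neumann--Poincar\'e inequality}, both on balls of radius $R\le 1$ and with constants depending only on $n$ and $q$. Granting these, the Sobolev inequality is produced by the standard functional-analytic mechanism ``doubling $+$ Poincar\'e $\Rightarrow$ Sobolev'' (Saloff-Coste, Haj\l asz--Koskela). First I would fix $q>n/2$ and record the relative volume comparison under integral Ricci bounds: by the Petersen--Wei estimate there is $\varepsilon_1=\varepsilon_1(n,q)>0$ so that $k(q,1)\le\varepsilon_1$ forces
\begin{equation*}
\frac{V(x,R)}{V(x,r)}\le A(n)\left(\frac{R}{r}\right)^n,\qquad 0<r\le R\le 1,
\end{equation*}
uniformly in $x$. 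In particular the volume measure restricted to unit balls is doubling with doubling exponent $n$ and a constant depending only on $n$.

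Next I would establish the local $L^2$-Poincar\'e inequality. Shrinking the threshold to $\varepsilon_2=\varepsilon_2(n,q)$ if necessary, the segment inequality (the Cheeger--Colding estimate, extended to the integral-curvature setting by controlling the average of $({\rm Ric}_-^K)^{q}$ along minimizing geodesics) yields
\begin{equation*}
\oint_{B(x,R)}\big|\phi-\phi_{B(x,R)}\big|^2\le P(n)\,R^2\oint_{B(x,R)}|\nabla\phi|^2,\qquad R\le 1,
\end{equation*}
for all Lipschitz $\phi$, where $\phi_{B}$ denotes the average of $\phi$ over $B$ and $P(n)$ depends only on $n$. The point is that the integral bound still controls the measure of the family of geodesics passing through a region of large curvature, which is all that the Cheeger--Colding argument needs.

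With $n$-doubling and the $L^2$-Poincar\'e inequality in hand, I would invoke the Haj\l asz--Koskela / Saloff-Coste machinery to upgrade the Poincar\'e inequality to its Sobolev form
\begin{equation*}
\left(\oint_{B(x,R)}\big|\phi-\phi_{B(x,R)}\big|^{\frac{2n}{n-2}}\right)^{\frac{n-2}{2n}}\le C(n)\,R\left(\oint_{B(x,R)}|\nabla\phi|^2\right)^{1/2}.
\end{equation*}
To reach the statement as written I would then pass from $\phi-\phi_{B}$ to $\phi$: bounding $|\phi_{B}|$ by $\big(\oint|\phi|^2\big)^{1/2}$ via Cauchy--Schwarz produces the extra zeroth-order term, so that the right-hand side becomes $\oint\big(R^2|\nabla\phi|^2+\phi^2\big)$; clearing the averages by multiplying through by the appropriate powers of $V=V(B_0(R))$ then converts the normalized inequality into the claimed form, the bookkeeping $\tfrac{n-2}{n}-1=-\tfrac2n$ producing exactly the factor $V^{-2/n}$. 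Finally I set $\varepsilon=\min\{\varepsilon_1,\varepsilon_2\}$.

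The step I expect to be the main obstacle is the \emph{quantitative uniformity of the two geometric inputs}. Both the doubling constant $A(n)$ and the Poincar\'e constant $P(n)$ must be shown to depend only on $n$ (and $q$) and not at all on the ambient geometry; this is precisely what the smallness of $k(q,1)$, together with $R\le 1$ and $q>n/2$, buys, and establishing it requires the full strength of the Petersen--Wei comparison and segment estimates in the integral-curvature regime. By contrast the volume normalization is automatic: once the averaged Sobolev--Poincar\'e inequality is in force, passing to the non-averaged form as above produces the factor $V^{-2/n}$ and the zeroth-order term with no further input. An alternative, equivalent route would replace the three steps by a local Gaussian upper bound for the heat kernel under the same hypothesis and then invoke Varopoulos's equivalence between on-diagonal heat-kernel bounds and Sobolev inequalities, but I would expect the doubling-plus-Poincar\'e path to be the most transparent. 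These uniform estimates constitute the technical core of \cite{Daietal}.
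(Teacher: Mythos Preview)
The paper does not prove this lemma; it is quoted verbatim as Corollary~4.6 of \cite{Daietal} and used as a black box in the proof of Corollary~\ref{hq311}. Your sketch is therefore not being compared against an argument in the present paper but against the content of the cited reference. As a summary of the Dai--Wei--Zhang strategy your outline is accurate in spirit: relative volume comparison under integral Ricci bounds (Petersen--Wei) gives doubling, a segment-type inequality gives the local Neumann--Poincar\'e inequality, and the Saloff-Coste/Haj\l asz--Koskela mechanism upgrades these to the local Sobolev inequality with the correct volume normalisation. For the purposes of this paper, however, nothing beyond the citation is required.
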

\begin{proof}[Proof of Corollary \ref{hq311}]Since $\|{\rm Ric}_{-}^K\|_{p,r}\leq\frac{1}{bR^2}$, the condition \eqref{condition} holds true for $b$ large enough. We can assume that such $b$ to be satisfied $\frac{1}{b}\leq \varepsilon$. Combining the assumption $k(p,1)\leq\frac{1}{b}$ and Lemma \ref{sobolevintegral}, we conclude that $M$ has a Sobolev inequality. Therefore, the proof follows directly from Theorem \ref{dl1}.
\end{proof}

		%\tableofcontents

\begin{thebibliography}{12}
			\addcontentsline{toc}{chapter}{\bf Tài liệu tham khảo}
			\bibitem{allen} S. M. Allen, J. W. Cahn, \emph{A microscopic theory for antiphase boundary motion and its application to antiphase domain coarsening}, Acta Metall. \textbf{27} (1979) 1085-1095.
	\bibitem{BZ}
\textsc{R. Bamler and Q. Zhang}, \emph{Heat kernel and curvature bounds in Ricci flows with bounded scalar curvature},  Advances in Mathematics, \textbf{319} (2017),396 - 450		
			\bibitem{BBS} A. Bj\"{o}rn, J. Bj\"{o}rn, and N. Shanmugalingam, \emph{The Liouville theorem for $p$-harmonic functions and quasiminimizers with finite energy}, preprint (see arXiv:1809.07155 [math.MG])
			
\bibitem{Cao} Xiaodong Cao, Bowei Liu, Ian Pendleton, Abigail Ward, \emph{Differential Harnack Estimates for Fisher's Equation}, Pacific Jour. Math., \textbf{290} (2017), No. 2, 273-300


\bibitem{cho} O. Chodosh and C. Mantoulidis, \emph{Minimal surfaces and the Allen–Cahn equation on 3-manifolds: index, multiplicity, and curvature estimates}, Annals of Mathematics, \textbf{191}, No. 1 (2020), 213-328

\bibitem{Daietal} X. Dai, G. Wei, and Z. Zhang, \emph{Local Sobolev Constant Estimate for Integral
Ricci Curvature Bounds}, Advances in Mathematics, \textbf{325} (2018), 1-33

			\bibitem{DD} N. T. Dung and N. D. Dat, \emph{Local and global sharp gradient estimates for weighted $p$-harmonic functions}, Jour. Math. Anal. Appl., \textbf{443} (2016) , 959-980


\bibitem{DD1} 
\textsc{H. T. Dung, N. T. Dung}, 
\emph{Sharp gradient estimates for a heat equation in Riemannian manifolds}, 
Proc. Am. Math. Soc. \textbf{147} (2019), no. 11, 5329-5338.

\bibitem{DK} N. T. Dung and N. N. Khanh, \emph{Gradient estimates and Liouville type theorems for Poisson equations},  Preprint, arXiv:1803.07251

\bibitem{DS} N. T. Dung and C. J. Sung, \emph{Analysis of weighted $p$-harmonic forms and applications}, Inter. Jour. of Math., \textbf{30}, No. 10 (2019) 1950058 

 \bibitem{Fisher} R. A. Fisher, \emph{The wave of advance of advantageous genes}, Annals of Eugenics, \textbf{7} (1937)(4), 355-369.
 
 
\bibitem{Gal}
\textsc{S. Gallot}, \emph{Isoperimetric inequalities based on integral norms of Ricci curvature},
Asterisque No. \textbf{157-158} (1988), 191 - 216

\bibitem{Hard} R. Hardt and F. H. Lin, \emph{Mappings minimizing the $L^p$-norm of the gradient}, Comm. Pure Appl. Math., \textbf{40} (1987), no. 5, 555–588.

\bibitem{Hol} I. Holopainen, \emph{A sharp $L^q$-Liouville theorem for $p$-harmonic functions}, Israel Jour. Math., \textbf{115} (2000), 363-379

\bibitem{Hou} S. B. Hou, \emph{Gradient estimates for the Allen-Cahn equation on Riemannian manifolds}, Proc. Amer. Math. Soc. \textbf{147} (2019), no. 2, 619-628

\bibitem{KKP} A. N. Kolmogorov, I. G. Petrovskii, and N. S. Piskunov, \emph{Etude de l’\'{e}quation de la diffusion avec croissance de la quantit\'{e} de mati\'{e}re et son application \'{a} un probl\'{e}me biologique}, Bulletin Universit\'{e} d’Etat \'{a} Moscou, pages 1–26, S\'{e}rie internationale, section A 1, 1937.

			\bibitem{kot} B. Kotschwar and L. Ni (2009), \textit{Local gradient estimates of $p$-harmonic functions}, $1/H$-flow,
			and an entropy formula, Ann. Sci. Ec. Norm.
			Sup\'{e}r., \textbf{42} (2009), no. 1, 1-36.

\bibitem{Mo} R. Moser, \emph{The inverse mean curvature flow and $p-$harmonic functions, } Jour. Eur. Math.  Soc., {\bf 9} (2007), 77-83.

\bibitem{Nak} N. Nakauchi, \emph{A Liouville type theorem for $p$-harmonic maps}, Osaka Jour. Math., \textbf{35} (1998), no. 2, 303-312

\bibitem{PR} F. Pacard, and M. Ritor\'{e}, \emph{From the constant mean curvature hypersurfaces to the gradient theory
of phase transitions}, Jour. Differential Geom. \textbf{64}(3) (2003) 356-423.
\bibitem{RO} 
\textsc{X. R. Oliv \'{e}}, 
\emph{Neumann Li-Yau gradient estimate under integral Ricci curvature bounds}, 
Proc. Am. Math. Soc. \textbf{147} (2019), no. 1, 411-426.



\bibitem{PW1} 
\textsc{P. Petersen, G. Wei}, 
\emph{Relative volume comparison with integral curvature bounds}, Geom. Funct.
Anal. \textbf{7} (1997), 1031–1045.

\bibitem{PW2} 
\textsc{P. Petersen, G. Wei}, 
\emph{ Analysis and geometry on manifolds with integral Ricci curvature bounds. II}, Trans. Amer. Math. Soc.
Anal. \textbf{353} (2001), 457–478.



\bibitem{Pigola} S. Pigola, M. Rigoli, and A. G. Setti, \emph{Constancy of $p$-harmonic maps of finite $q$-energy into non-positively curved manifolds}, Math. Zeits., \textbf{258} (2008), 347-362.

\bibitem{pino} M. del Pino, and J. C. Wei, \emph{Solutions to the Allen Cahn Equation and Minimal Surfaces}, Milan Jour. Math. \textbf{79} (2011), 39-65.

\bibitem{Rose} 
\textsc{C. Rose}, 
\emph{Heat kernel upper bound on Riemannian manifolds with locally uniform Ricci curvature integral bounds}, 
J. Geom. Anal.  \textbf{27} (2017), 1737–1750.

\bibitem{W} L. F. Wang, \emph{Eigenvalue estimate for the weighted $p$-Laplacian}, Annali di Matematica Pure Appl., \textbf{191} (2012) no.3, 539-550.

		\bibitem{wangli} Y. Z. Wang and H. Q. Li (2016), \textit{Lower bound estimates for the first eigenvalue of the weighted $p$-Laplacian on smooth metric measure spaces}, Differential Geom. Appl., \textbf{45} (2011), 23-42.
			
			\bibitem{WYC} Y. Wang, J. Yang and, W. Chen, \emph{Gradient estimates and entropy formulae for weighted $p$-heat equations on smooth metric measure spaces}, Acta Math. Scientia \textbf{33B} (2013), 963 - 974
	
			\bibitem{WZ} X. Wang and L. Zhang, \textit{Local gradient estimate for $p$-harmonic functions on Riemannian manifolds}, Comm. Anal. Geom., \textbf{19} (2011), no. 4, 759-771.
		
%\bibitem{W1} 
%\textsc{ W. Wang, P. Zhang}, 
%\emph{Some Gradient Estimates and Harnack Inequalities for Nonlinear Parabolic Equations on Riemannian Manifolds}, Math. Nachr. \textbf{290} (2017), 1905-1917.

\bibitem{W1} 
\textsc{W. Wang}, 
\emph{Elliptic type gradient estimates under integral Ricci curvature bounds}, Proc. Am. Math. Soc. (2019), https://doi.org/10.1090/proc/14774.

\bibitem{W2} 
\textsc{W. Wang}, 
\emph{Harnack inequality, heat kernel bounds and eigenvalue estimates under integral Ricci curvature bounds}, J. Differ. Equ. \textbf{269} (2020), 1243-1277.
	
\bibitem{wu} J.Y. Wu, \emph{Elliptic gradient estimates for a weighted heat equation and applications}, Math. Zeits., \textbf{280} (2015), 451-468	
	
			\bibitem{ZZ}
\textsc{Q.S. Zhang, M. Zhu}, \emph{Li-Yau gradient bounds on compact manifolds under nearly optimal curvature conditions}, J. Funct. Anal. \textbf{275} (2018), no. 2,  478-515.

\bibitem{ZZ1}
\textsc{Q.S. Zhang, M. Zhu}, \emph{Li-Yau gradient bound for collapsing manifolds under integral curvature
	condition}, Proc. Am. Math. Soc. \textbf{145} (2017), no. 1, 3117-3126.

			%\bibitem{Zhaofang} L. Zhao and F.W. Fang, \emph{A new local gradient estimate for a nonlinear equation under integral curvature condition on manifolds}, Pacific Jour. Math., \textbf{306} (2020), 755-765.
			\bibitem{zhaoyang} L. Zhao and D. Yang (2018), \textit{Gradient estimates for the $ p $-Laplacian Lichnerowicz equation on smooth metric measure spaces}, Proc. Amer. Math. Soc., (2018), 5451-5461			
			
		\end{thebibliography}
	\end{document}